\def\theequation{\thesection.\@arabic \c@equation}
\def\@citecolor{blue}
\def\@linkcolor{blue}
\def\@urlcolor{blue}
\def\theenumi{\@alph\c@enumi}
\theoremstyle{plain}
\newtheorem{theorem}[equation]{Theorem}
\newtheorem{lemma}[equation]{Lemma}
\newtheorem{corollary}[equation]{Corollary}
\newtheorem{proposition}[equation]{Proposition}
\theoremstyle{definition}
\newtheorem{remark}[equation]{Remark}
\newtheorem{example}[equation]{Example}
\newtheorem{definition}[equation]{Definition}
\newtheorem{notation}[equation]{Notation}
\newtheorem{discussion}[equation]{Discussion}
\newenvironment{discussionbox}[1][]{%
    \begin{discussion}[#1]\pushQED{\qed}}{\popQED \end{discussion}}
\newtheorem{observation}[equation]{Observation}
\newtheorem{construction}[equation]{Construction}
\newcounter{step}
\newcommand{\fraka}{{\mathfrak a}}
\newcommand{\frakb}{{\mathfrak b}}
\newcommand{\calE}{\mathcal E}
\newcommand{\calI}{\mathcal I}
\newcommand{\calK}{\mathcal K}
 \let\strSh\calO
\newcommand{\frakp}{{\mathfrak p}}
\newcommand{\naturals}{\mathbb{N}}
\newcommand{\ints}{\mathbb{Z}}
\newcommand{\complex}{\mathbb{C}}
\def\to{\longrightarrow}
\DeclareMathOperator{\rank}{rk}
\DeclareMathOperator{\coker}{coker}
\DeclareMathOperator{\projective}{\mathbb{P}}
\DeclareMathOperator{\projdim}{pd}
\DeclareMathOperator{\height}{ht}
\DeclareMathOperator{\codim}{codim}
\DeclareMathOperator{\Hom}{Hom}
\DeclareMathOperator{\Tor}{Tor}
\DeclareMathOperator{\Spec}{Spec}
\DeclareMathOperator{\Proj}{Proj}
\DeclareMathOperator{\Min}{Min}
\DeclareMathOperator{\depth}{depth}
\DeclareMathOperator{\homology}{H}
\newcommand{\define}[1]{\emph{#1}}
\newcommand{\minus}{\ensuremath{\smallsetminus}}
\DeclareMathOperator{\image}{Im}
\DeclareMathOperator{\Ass}{Ass}
\DeclareMathOperator{\Ann}{Ann}
\DeclareMathOperator{\socle}{soc}
\DeclareMathOperator{\Pic}{Pic}
\DeclareMathOperator{\Ext}{Ext}
\newcommand{\sheafHom}{\mathcal{H}om}
\newcommand{\sheafExt}{\mathcal{E}xt}
\def\RDerChar{\mathbf{R}}
\def\RDer{\@ifnextchar[{\R@Der}{\ensuremath{\RDerChar}}}
\def\R@Der[#1]{\ensuremath{\RDerChar^{#1}}}
\begin{document}
\title{On codimension-two subcanonical varieties inside $\projective^n$}
\author{Manoj Kummini}
\address{Chennai Mathematical Institute, Siruseri, Tamilnadu 603103. India}
\email{mkummini@cmi.ac.in}

\author{Abhiram Subramanian}
\address{Chennai Mathematical Institute, Siruseri, Tamilnadu 603103. India}
\email{abhiram@cmi.ac.in}

\thanks{Both authors were partially supported by an Infosys Foundation
grant. The second author was supported by a PhD fellowship from the
National Board for Higher Mathematics, India.}

\subjclass[2020]{Primary: 13C40; Secondary: 14M10}

\begin{abstract}
Let $X \subseteq \projective^n, n \geq 4$ be a codimension-two subcanonical
local complete intersection variety with ideal sheaf $\calI_X$. Let $a_X
\in \ints$ be such that
$\omega_X = \strSh_X(a_X)$. Assume that there exists 
$\displaystyle j \leq \frac{a_X+n+2}{2}$ such that 
$\Gamma(\calI_X(j)) \neq 0$.
We prove some sufficient conditions on the first deficiency module
$\homology^1_*(\calI_X)$ that ensures that $X$ is a complete intersection.
We also show that smooth codimension-two $3$-Buchsbaum varieties inside 
$\projective^n, n \geq 6$ are complete intersections.
\end{abstract}

\maketitle

\section{Introduction}

Hartshorne's conjecture~\cite[p.~1017]{HartshorneVarSmallCodim1974} asserts
that over an algebraically closed field $\Bbbk$,
a smooth subvariety $X \subseteq  \projective_\Bbbk^n$
with $\dim X > \frac {2n}3$ is a complete intersection.
He remarks~\cite[p.~1022]{HartshorneVarSmallCodim1974} that,
when the codimension of $X$ is two,
even though the conjecture is stated only for $n \geq 7$ (and,
consequently, $\dim X \geq 5$), there are no known examples of
four-dimensional smooth subvarieties of $\projective_\Bbbk^6$ that are not
complete intersections. 
If $X$ is a codimension-two smooth subvariety
of $\projective_\Bbbk^n$, with $n \geq 6$, then $\Pic X \simeq \ints$,
generated
by $\strSh_X(1)$~\cite[Theorem~2.2(d)]{HartshorneVarSmallCodim1974}.
Hence such varieties are subcanonical, i.e., 
there exists $a_X \in \ints$ such that $\omega_X = \strSh_X(a_X)$.
Building on the subsequent work on
this conjecture, Ellia~\cite[Conjecture~2]{ElliaSurvey2000} proposed a
stronger
conjecture in characteristic $0$: if $X \subseteq \projective^n_\complex$,
$n \geq 5$, is a smooth subcanonical\footnote
{When $n=5$, one must require that $X$ is subcanonical, since
the ideal of maximal minors of a $2 \times 3$ matrix of indeterminates
defines a smooth codimension-two subvariety of $\projective_\Bbbk^5$.}
variety of codimension two, then $X$ is a complete 
intersection\footnote{Ballico and
Chiantini~\cite[p.~100]{BallicoChiantiniSmSubcan1983} also mention that 
every known example of
codimension-two smooth subcanonical varieties in
$\projective^n_\complex$, $n > 4$, is a complete intersection.}.

In this paper, we prove some results about 
codimension-two subcanonical local complete intersections.
Let $\Bbbk$ be a field and $n \geq 4$ an integer.
Let $X \subseteq \projective^n := \projective^n_\Bbbk$ be a local complete
intersection subscheme with $\codim_{\projective^n}X = 2$.
Let $R = \Bbbk[x_0, \ldots, x_n]$ be a polynomial ring in the
indeterminates $x_0, \ldots, x_n$, graded with $\deg x_i = 1$ for all $i$.
Write $R_+$ for the homogeneous maximal ideal of $R$.
Write $\calI_X$ for the ideal sheaf of $X$ inside $\projective^n$.
Let $I_X = \Gamma_*(\calI_X)$. Let $d_1 \leq d_2 \leq \cdots \leq d_m$ be
the degrees of a minimal homogeneous generating set of $I_X$.
When $X$ is a Cohen-Macaulay scheme,
we say that $X$ is \define{subcanonical} if there exists $a_X \in \ints$
such that $\omega_X = \strSh_X(a_X)$;
here $\omega_X :=
\sheafExt_{\strSh_{\projective^n}}^2(\strSh_X,
\strSh_{\projective^n}(-n-1))$.
The \define{first deficiency module} 
(following~\cite[\S 3.1]{SchenzelDualCxBuchsbaumLNM1982}) $M_1(X)$ of $X$ is 
$\homology^1_*(\calI_X)$, which equals $\homology_{R_+}^1(R/I_X)$;
see~\cite[\S 1.2]{MiglioreLiaisonBook1998} also.
It is a graded $R$-module.
For $R$-modules $M$, $\socle(M)$ denotes the \define{socle} of $M$, i.e.,
$(0:_M R_+)$.

We now list our main results.

\begin{theorem}
\label{theorem:minHOne}
Suppose that $X \subseteq \projective^n$ is a codimension-two subcanonical
local complete intersection scheme, with $\omega_X = \strSh_X(a_X)$.
Assume that $2d_1 \leq a_X+n+2$ and that
\[
\min\{j \mid \homology^1(\projective^n, \calI_X(j ) ) \neq 0 \} \geq d_1-1.
\]
Then $X$ is a complete intersection.
\end{theorem}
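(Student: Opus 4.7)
The plan is to apply Serre's construction. Because $X$ is a codimension-two subcanonical local complete intersection with $\omega_X = \strSh_X(a_X)$, the Serre--Ferrand construction produces a rank-two vector bundle $\calE$ on $\projective^n$ fitting into the short exact sequence
\[
0 \to \strSh_{\projective^n} \xrightarrow{s_0} \calE \to \calI_X(e) \to 0,
\qquad e := a_X + n + 1 = c_1(\calE).
\]
Since the intermediate cohomology of $\strSh_{\projective^n}$ vanishes, we have $H^i_*(\calE) \cong H^i_*(\calI_X)$ for $1 \leq i \leq n-1$; in particular the hypothesis on $M_1(X)$ becomes $H^1(\calE(j)) = 0$ for $j \leq d_1 - 2$, and by the Serre correspondence $X$ is a complete intersection if and only if $\calE$ splits as a direct sum of line bundles. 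The numerical inequality $2d_1 \leq a_X+n+2$ combined with $d_1 \geq 1$ forces $d_1 < e$, so $H^0(\strSh(d_1-e)) = 0$; the Serre sequence twisted by $d_1-e$ then yields an isomorphism $H^0(\calE(d_1-e)) \cong H^0(\calI_X(d_1))$, and I would lift a minimal generator $f$ of $I_X$ to a section $s_f \in H^0(\calE(d_1-e))$.

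The crux is to show $s_f$ is nowhere-vanishing. A factorization $s_f = h \cdot s'_f$ with $\deg h > 0$ would demand $s'_f \in H^0(\calE(d_1-e-\deg h))$; but cohomology of the Serre sequence, combined with $(I_X)_k = 0$ for $k < d_1$ and $H^0(\strSh(k-e)) = 0$ for $k < e$, yields $H^0(\calE(k)) = 0$ for every $k < d_1-e$, precluding $s'_f$. Hence $s_f$ has no divisorial zeros, and its zero scheme $Y$ is either empty or a codimension-two local complete intersection (the Fitting ideal of $\coker s_f$ being locally generated by the two components of $s_f$). Assume for contradiction that $Y$ is nonempty; the Serre/Bourbaki construction applied to $s_f$ as a regular section of $\calE(d_1-e)$ then produces
\[
0 \to \strSh_{\projective^n} \xrightarrow{s_f} \calE(d_1-e) \to \calI_Y(2d_1-e) \to 0.
\]
The same computation as above gives $H^0(\calE(j-d_1)) = 0$ for every $j \leq 2d_1 - 1$, whereas $H^0(\strSh(j - 2d_1 + e)) \neq 0$ whenever $j \geq 2d_1 - e$; therefore for any $j$ in the interval $[2d_1-e,\,2d_1-1]$, the cohomology sequence of the display above yields
\[
\dim H^0(\calI_Y(j)) = \dim H^0(\calE(j-d_1)) - \dim H^0(\strSh(j - 2d_1 + e)) < 0,
\]
an absurdity. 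The interval is nonempty precisely because $e \geq 1$, which follows from $2d_1 \leq a_X+n+2$ and $d_1 \geq 1$.

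Consequently $Y = \emptyset$, $s_f$ is nowhere-vanishing, and hence $\calE(d_1-e) \cong \strSh \oplus \strSh(2d_1-e)$, so $\calE \cong \strSh(d_1) \oplus \strSh(a_X+n+1-d_1)$; the Serre section $s_0$ now expresses $X$ as the complete intersection of two forms of degrees $d_1$ and $a_X+n+1-d_1$. The principal obstacle is the Bourbaki dimensional contradiction: although the arithmetic is elementary, it depends on a careful identification of the Fitting-ideal scheme structure on $Y$, on the numerical inequality $2d_1 \leq a_X+n+2$ (guaranteeing simultaneously that $s_f$ exists and that the interval $[2d_1-e,\,2d_1-1]$ is nonempty), and on the cohomological framework set up in Step 1, in which the hypothesis on $M_1(X)$ ensures that $\calE$ behaves as expected.
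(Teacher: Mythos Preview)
Your construction of the section $s_f \in H^0(\calE(d_1-e))$ and its zero scheme $Y$ is precisely the construction the paper carries out (their $\xi$ and $Z$, after renormalising $\calE$). The gap is in the contradiction step. You assert that ``the same computation as above gives $H^0(\calE(j-d_1)) = 0$ for every $j \leq 2d_1-1$,'' but the earlier computation only yields $H^0(\calE(k)) = 0$ for $k < d_1-e$, i.e.\ for $j < 2d_1-e$. At the left endpoint $j = 2d_1-e$ of your interval one has $j - d_1 = d_1 - e$, and $H^0(\calE(d_1-e))$ contains $s_f \neq 0$; so the range on which both of your conditions hold is in fact empty, and no negative dimension ever appears.

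More tellingly, your argument never actually invokes the hypothesis
$\min\{j : H^1(\calI_X(j)) \neq 0\} \geq d_1-1$. The scheme $Y$ is nothing
but the zero locus of a minimal-degree section of (a twist of) $\calE$; for
any non-split rank-two bundle on $\projective^n$ this zero locus is a
genuine nonempty codimension-two scheme, so no argument that ignores the
$H^1$ hypothesis can rule $Y$ out. The paper uses the hypothesis exactly
here: from $M_1(Y) \cong M_1(X)(e-d_1)$ one obtains
$\epsilon := \min\{j : H^0(\strSh_Y(j)) \neq 0\} \geq 2d_1 - e - 1$, and
then Serre duality on $Y$ gives
\[
a(R/I_Y) \;=\; (2d_1 - e - n - 1) - \epsilon \;\leq\; -n,
\]
contradicting the general lower bound $a(R/I_Y) \geq -\dim(R/I_Y) =
-(n-1)$. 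Your setup is correct up to the point where $Y$ is produced; what
is missing is this use of the $H^1$ hypothesis to control the cohomology of
$\strSh_Y$.
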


\begin{theorem}
\label{theorem:CIorNegSocZero}
Suppose that $X \subseteq \projective^n$ is a codimension-two subcanonical
local complete intersection scheme, with $\omega_X = \strSh_X(a_X)$.
Assume that $2d_1 \leq a_X+n+2$ and that $X$ is not a complete intersection.
Then 
\[
(\socle(M_1(X)))_j = 0
\;\text{for all}\;
j < \max \{ a_X+n+2-d_1, d_3-2\}.
\]
\end{theorem}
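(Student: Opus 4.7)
The plan is to establish the contrapositive: assume $X$ is not a complete intersection and derive the socle bound. The key tool is Serre's construction, which is available because $X$ is codimension-two subcanonical LCI. This produces a rank-two vector bundle $\calE$ on $\projective^n$ sitting in
\[
0 \to \strSh_{\projective^n} \to \calE \to \calI_X(t) \to 0, \qquad t := a_X + n + 1,
\]
with $\calE^\vee \simeq \calE(-t)$. Since $n \geq 3$, the vanishings $\homology^i(\strSh_{\projective^n}(k)) = 0$ for $1 \leq i \leq n-1$ and all $k$, together with the twisted long exact cohomology sequence, yield a graded isomorphism $M_1(X) \simeq \homology^1_*(\calE)(-t)$. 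By Serre's correspondence, $\calE$ splits as a direct sum of line bundles precisely when $X$ is a complete intersection, so we may assume $\homology^1_*(\calE) \neq 0$.

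Graded Serre duality on $\projective^n$, combined with the self-duality $\calE^\vee \simeq \calE(-t)$, gives
\[
\homology^1(\calE(k))^\vee \simeq \homology^{n-1}(\calE(-k-n-1-t)),
\]
so the graded Matlis dual of $\homology^1_*(\calE)$ is a shift of $\homology^{n-1}_*(\calE)$. Under this identification, socle elements of $\homology^1_*(\calE)$ correspond to minimal generators (modulo $R_+$) of $\homology^{n-1}_*(\calE)$; the degree bound $j < a_X + n + 2 - d_1$ on $M_1(X)$ translates into a specific window on the degrees of such generators. Meanwhile, the minimal generator $f \in (I_X)_{d_1}$ lifts (since $d_1 < t$) to a section $s_1 \in \homology^0(\calE(d_1 - t))$, providing a ``second piece'' of $\calE$ beyond the tautological section $\strSh \hookrightarrow \calE$.

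The main step, and the main obstacle, is then to use a hypothetical minimal generator of $\homology^{n-1}_*(\calE)$ in the critical degree (dual to a socle element of $M_1(X)$ below $a_X+n+2-d_1$) together with $s_1$ to produce a splitting $\calE \simeq \strSh(a) \oplus \strSh(b)$. The hypothesis $2d_1 \leq a_X + n + 2$, equivalently $d_1 \leq t - d_1 + 1$, is precisely what makes the degree arithmetic compatible: the two sections thus produced live in degrees whose sum matches $c_1(\calE) = t$, which is necessary for a rank-two splitting. Once $\calE$ splits, $X$ is the complete intersection of the zero loci of these line-bundle factors, contradicting the assumption. A cleaner alternative finale is to argue that the dual generator produced in the critical degree enables one to lift the minimum degree of $M_1(X)$ above $d_1 - 1$, whereupon Theorem~\ref{theorem:minHOne} concludes immediately that $X$ is a complete intersection.
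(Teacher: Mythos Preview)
Your setup is fine: the Serre bundle $\calE$, the identification $M_1(X)\simeq\homology^1_*(\calE)(-t)$, and the Matlis-duality correspondence between $\socle(\homology^1_*(\calE))$ and minimal generators of $\homology^{n-1}_*(\calE)$ are all correct and are indeed in the background of the paper. But the ``main step'' is not a proof, and you say so yourself (``the main obstacle''). A minimal generator of $\homology^{n-1}_*(\calE)$ is not a section of $\calE$, and there is no mechanism explained for turning it into one; even if you had two sections of $\calE$ in degrees summing to $c_1(\calE)$, that alone does not force a splitting unless their wedge is a \emph{nowhere-vanishing} section of $\det\calE$, which you have no control over. The ``cleaner alternative finale'' is equally unsupported: a socle element of $M_1(X)$ in low degree is itself a nonzero element of $M_1(X)$ in low degree, so it cannot by itself push $\min\{j:\homology^1(\calI_X(j))\neq 0\}$ up to $d_1-1$.

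The paper's argument is structurally different. Instead of working with $\calE$ directly, it uses the section $s_1\in\homology^0(\calE(d_1-t))$ you mention to build a second codimension-two subcanonical scheme $Z$ (the zero locus of $s_1$), with defining ideal $J$ and $M_1(Z)=M_1(X)(t-d_1)$. The desired socle bound on $M_1(X)$ then becomes: $(\socle(M_1(Z)))_j=0$ for all $j\leq 0$. This is obtained by controlling the degrees of the minimal generators of $J$. A case analysis shows that $J$ has no linear form, and that if $J$ contained three independent quadrics then $R/J$ would be Cohen--Macaulay (Lemma~\ref{lemma:3quad}), forcing $\calE$ to split via Horrocks. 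The remaining case, $Z$ reduced, is dispatched by Lemma~\ref{lemma:RedNoSecNeg} and Theorem~\ref{theorem:minHOne}. Hence $\delta_3\geq 3$ in the notation of Lemma~\ref{lemma:tid3}, and Proposition~\ref{proposition:socM1Z} (which bounds the socle of $M_1(Z)$ by $\delta_3-2$ via a Koszul/Tor computation) gives the conclusion. The substantive content you are missing is precisely this translation to the auxiliary scheme $Z$ and the generator-degree analysis of $J$; without something playing that role, the duality observation about $\homology^{n-1}_*(\calE)$ does not close the argument.
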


Let $k$ be a positive integer.
A codimension-two projective subscheme $X \subseteq \projective^n$ is
\define{$k$-Buchsbaum} if 
$(R_+)^k\homology^i_*(\projective^n, \calI_{X \cap L}) = 0$
for every linear subspace $L \subseteq \projective^n$
and for all $1 \leq i \leq \dim L-2$.
Ellia and Sarti~\cite [Proposition~13]{ElliaSartikBuchs1999} showed that if
$X$ is a smooth codimension-two $3$-Buchsbaum
subscheme of $\projective^n$, $n \geq 6$, with $\omega_X = \strSh_X(a_X )$
and $2d_1 \geq a_X + n + 2$, then $X$ is a complete intersection.
We use our above results to complement this, and obtain the following:

\begin{corollary}
\label{corollary:3Buchs}
Let $X$ be a smooth codimension-two $3$-Buchsbaum
subscheme of $\projective^n$, $n \geq 6$. 
Then $X$ is a complete intersection.
\end{corollary}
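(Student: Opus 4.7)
The plan is to reduce the corollary to Theorems~\ref{theorem:minHOne} and~\ref{theorem:CIorNegSocZero}, together with the Ellia--Sarti result in the complementary regime. Since $n \geq 6$ and $X$ is smooth of codimension two, \cite[Theorem~2.2(d)]{HartshorneVarSmallCodim1974} gives $\Pic X \simeq \ints \cdot \strSh_X(1)$, so $X$ is subcanonical with $\omega_X = \strSh_X(a_X)$ for some $a_X \in \ints$. Smoothness also makes $X$ a local complete intersection, so the hypotheses of the two theorems are in force as soon as $2d_1 \leq a_X + n + 2$.

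I would then split into cases according to whether $2d_1 \geq a_X + n + 2$ or $2d_1 \leq a_X + n + 1$. In the first case, \cite[Proposition~13]{ElliaSartikBuchs1999} gives the conclusion directly. The substantive case is $2d_1 \leq a_X + n + 1$, which I would attack by contradiction: assume $X$ is not a complete intersection. Then Theorem~\ref{theorem:CIorNegSocZero} yields $(\socle(M_1(X)))_j = 0$ for all $j < a_X + n + 2 - d_1$.

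The main computation is to propagate this socle vanishing to full vanishing of $M_1(X)$ in low degrees, using the $3$-Buchsbaum hypothesis $(R_+)^3 M_1(X) = 0$, which forces $(R_+)^2 M_1(X) \subseteq \socle(M_1(X))$. So for a homogeneous $m \in M_1(X)_j$ with $j < a_X + n - d_1$ we have $(R_+)^2 m \subseteq \socle(M_1(X))_{j+2} = 0$; hence $R_+ m \subseteq \socle(M_1(X))_{j+1} = 0$; and finally $m \in \socle(M_1(X))_j = 0$. This cascade gives $M_1(X)_j = 0$ for all $j < a_X + n - d_1$. Since $2d_1 \leq a_X + n + 1$ is equivalent to $a_X + n - d_1 \geq d_1 - 1$, this translates into
\[
\min\{j \mid \homology^1(\projective^n, \calI_X(j)) \neq 0\} \geq d_1 - 1,
\]
and Theorem~\ref{theorem:minHOne} forces $X$ to be a complete intersection, contradicting the assumption.

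The main obstacle is essentially numerological rather than conceptual: the three-step filtration $M_1(X) \supseteq R_+ M_1(X) \supseteq (R_+)^2 M_1(X) \supseteq 0$ coming from the $3$-Buchsbaum condition amplifies the socle bound from Theorem~\ref{theorem:CIorNegSocZero} by exactly the two degrees needed to match the threshold $d_1 - 1$ of Theorem~\ref{theorem:minHOne}, with the boundary case $2d_1 = a_X + n + 2$ landing in the Ellia--Sarti regime and thus patching together with no gap.
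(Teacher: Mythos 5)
Your proof is correct and takes essentially the same route as the paper: reduce to the regime $2d_1 \leq a_X+n+1$ via Ellia--Sarti, use Theorem~\ref{theorem:CIorNegSocZero} to get the socle vanishing, amplify it by two degrees using $(R_+)^2 M_1(X) \subseteq \socle(M_1(X))$, and conclude with Theorem~\ref{theorem:minHOne}. Your explicit three-step cascade just spells out the degree shift that the paper states in one line.
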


Chiantini and 
Valabrega~\cite[Theorems~1.3, 1.6]{ChiantiniVallabregaSubcanCurves1983}
showed that if $X$ is a smooth subcanonical curve in
$\projective^3_\complex$ with 
$\homology^1(\projective^3_\complex, \calI_X(j)) = 0$ for some
special $j$ (determined from $a_X$), then $X$ is a complete intersection.
In~\cite[p.~388]{ChiantiniVBReflShLowCodimVars2006}
Chiantini asks if a similar result holds for codimension-two subcanonical
local complete intersection schemes 
$X \subseteq \projective^n$, $n \geq 4$.
(The question is stated in an equivalent form for rank-two vector bundles.)
The following corollary gives an affirmative case of this question,
under the assumption that $2d_1 \leq a_X+n+2$.

\begin{corollary}
\label{corollary:gap}
Assume that $2d_1 \leq a_X+n+2$. If $\homology^1(\projective^n, \calI_X(j))
= 0$ for some $j$ with 
\[
d_1-2 \leq j \leq  \max \{ a_X+n+2-d_1, d_3-2\},
\]
then $X$ is a complete intersection.
\end{corollary}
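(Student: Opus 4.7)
The plan is to prove the contrapositive, combining the two preceding theorems with a downward-induction argument on degree. Assume that $X$ is not a complete intersection. Theorem~\ref{theorem:minHOne}, applied in contrapositive form, yields that $\mu := \min\{j : \homology^1(\projective^n, \calI_X(j)) \neq 0\}$ is a finite integer with $\mu \leq d_1 - 2$ (the case where no such $j$ exists is immediate, since then the hypothesis of Theorem~\ref{theorem:minHOne} is vacuously satisfied and $X$ would already be a complete intersection). Theorem~\ref{theorem:CIorNegSocZero} further yields that $\socle(M_1(X))_j = 0$ for every $j < a_X + n + 2 - d_1$.

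Next, I would use the hypothesized degree $j_0 \in [d_1-2,\, a_X+n+2-d_1]$ with $M_1(X)_{j_0} = 0$ to propagate this vanishing downward. The elementary observation is: whenever $M_1(X)_{j+1} = 0$, multiplication by $R_1$ gives $R_1 \cdot M_1(X)_j = 0$, hence $R_+ \cdot M_1(X)_j = 0$, so $M_1(X)_j \subseteq \socle(M_1(X))_j$. Provided $j < a_X + n + 2 - d_1$, the socle vanishing from Theorem~\ref{theorem:CIorNegSocZero} then forces $M_1(X)_j = 0$. Running this induction downward from $j_0$, every degree encountered satisfies the bound, so $M_1(X)_j = 0$ for every $j \leq j_0$. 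Since $\mu \leq d_1 - 2 \leq j_0$, this gives $M_1(X)_\mu = 0$, contradicting the definition of $\mu$.

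I do not anticipate a deep obstacle; the argument is essentially the natural interaction between the two theorems. The delicate point is the index bookkeeping, and it explains the precise form of the interval in the hypothesis: the lower endpoint $d_1 - 2$ is exactly what ensures $\mu \leq j_0$, so that the downward induction actually reaches the degree at which one gets a contradiction, while the upper endpoint $a_X + n + 2 - d_1$ is exactly what keeps every intermediate degree in the induction within the range where Theorem~\ref{theorem:CIorNegSocZero} supplies socle vanishing. Loosening either endpoint by one would break the argument, so the main thing to verify carefully is that the step $j+1 \mapsto j$ of the induction never leaves the permissible range.
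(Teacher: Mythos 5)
Your argument is correct and is essentially the paper's own proof: both rest on the observation that a nonzero graded piece of $M_1(X)$ sitting immediately below a vanishing one lies in $\socle(M_1(X))$, combined with Theorem~\ref{theorem:CIorNegSocZero} to kill the socle in degrees $< a_X+n+2-d_1$ and Theorem~\ref{theorem:minHOne} to handle low degrees. The only cosmetic difference is that you propagate the vanishing downward by induction from $j_0$, whereas the paper jumps directly to $k = \max\{k < j : \homology^1(\calI_X(k)) \neq 0\}$ and derives the contradiction there.
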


For more results similar to the above ones, see 
\cite{ElliaFrancoCodimTwo2002},
\cite{ElliaSurvey2000},
\cite{ElliaSartikBuchs1999},
\cite{ChiantiniVallabregaSubcanCurves1983},
\cite{ChiantiniVallabregaSubcanCurves1987},
\cite{ChiantiniVBReflShLowCodimVars2006} and
\cite{KumarRaoBuchsbaumBdles2000} and their bibliography.

This paper is organized as follows.
Section~\ref{section:prelim} discusses some preliminaries.
If the given local complete intersection scheme is not a complete
intersection, we construct a new 
local complete intersection scheme in Section~\ref{section:construction}.
Proofs of the above results are in Section~\ref{section:proofs}.

\section{Preliminaries}
\label{section:prelim}

In this section we collect some known results.
We prove a few statements; they might be known, but we have included a
proof for completeness. 
We start with a result about codimension-two subcanonical subschemes of
$\projective^n$, $n \geq 3$;
see~\cite[Proposition~6.1]{HartshorneVarSmallCodim1974}, \cite[Chapter~I,
\S\S~5.1 and~5.2]{OSSvb80}.
\begin{proposition}
\label{proposition:lcisubcan}
Let $Z \subseteq \projective^n$, $n \geq 3$ be a codimension-two closed
subscheme, with ideal sheaf $\calI_Z$.
Then the following are equivalent:

\begin{enumerate}

\item
$Z$ is local complete intersection subcanonical scheme with
$\omega_Z = \strSh_Z(a_Z)$, $a_Z \in \ints$.

\item
There exists a rank-two locally free sheaf $\calE$ on 
$\projective^n$ and an exact sequence
\[
0 \to \strSh_{\projective^n}(-(a_Z+n+1)) \to \calE \to \calI_Z \to 0.
\]
\end{enumerate}
Further, $Z$ is a complete intersection if and only if $\calE$ splits,
i.e., it is isomorphic to a direct sum of two line bundles.
\end{proposition}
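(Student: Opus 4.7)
My plan is to invoke the Hartshorne--Serre correspondence; concretely, the proof has three pieces.

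For (b)$\Rightarrow$(a), I would work locally. On an affine open $U \subseteq \projective^n$ over which $\calE|_U$ is trivial, the given sequence reads (after a twist) $0 \to \strSh_U \to \strSh_U^{\oplus 2} \to \calI_Z|_U \to 0$, so $\calI_Z|_U$ is presented by two generators with a single syzygy; since $\codim_{\projective^n} Z = 2$, these two generators form a regular sequence and $Z$ is a local complete intersection. Next, I would apply $\sheafHom(-, \strSh_{\projective^n})$ to the sequence. Using $\sheafHom(\calI_Z, \strSh_{\projective^n}) = \strSh_{\projective^n}$ (valid since $\codim Z \geq 2$) and $\sheafExt^i(\calE, \strSh_{\projective^n}) = 0$ for $i \geq 1$, one identifies $\sheafExt^2(\strSh_Z, \strSh_{\projective^n})$ with $\strSh_{\projective^n}(a_Z+n+1)|_Z$; since $\omega_Z = \sheafExt^2(\strSh_Z, \strSh_{\projective^n}(-n-1))$, this gives $\omega_Z \cong \strSh_Z(a_Z)$.

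For the converse (a)$\Rightarrow$(b), I would run Serre's construction with $\mathcal{L} := \strSh_{\projective^n}(a_Z+n+1)$. The l.c.i. and subcanonical hypotheses give
\[
\sheafExt^1(\calI_Z \otimes \mathcal{L}, \strSh_{\projective^n}) \cong \omega_Z(n+1) \otimes \mathcal{L}^{-1} \cong \strSh_Z,
\]
while for $n \geq 3$ the vanishing $\homology^i(\projective^n, \strSh_{\projective^n}(-a_Z-n-1)) = 0$ for $i = 1, 2$ collapses the local-to-global spectral sequence to
\[
\Ext^1(\calI_Z \otimes \mathcal{L}, \strSh_{\projective^n}) \cong \homology^0(Z, \strSh_Z).
\]
The unit $1 \in \homology^0(Z, \strSh_Z)$ then corresponds to an extension
\[
0 \to \strSh_{\projective^n} \to \calE' \to \calI_Z \otimes \mathcal{L} \to 0,
\]
and setting $\calE := \calE' \otimes \mathcal{L}^{-1}$ yields the required sequence.

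The splitting assertion is straightforward in both directions. If $\calE \cong \strSh_{\projective^n}(-a) \oplus \strSh_{\projective^n}(-b)$, the map $\calE \to \calI_Z$ is given by two homogeneous forms of degrees $a, b$ that must generate $\calI_Z$, so $Z$ is a complete intersection; conversely, for a complete intersection the Koszul complex furnishes a sequence of the form in (b) with $\calE$ a direct sum of two line bundles. The main obstacle in this outline is the local freeness of $\calE'$ in Serre's construction: this is the Cayley--Bacharach--Serre criterion, and here it reduces to the observation that the extension class coming from $1 \in \homology^0(\strSh_Z)$ is a generator of the stalk of $\sheafExt^1(\calI_Z \otimes \mathcal{L}, \strSh_{\projective^n})$ at every closed point of $Z$.
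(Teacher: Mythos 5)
Your proof is correct and is exactly the standard Hartshorne--Serre correspondence argument; the paper gives no proof of this proposition, citing instead \cite[Proposition~6.1]{HartshorneVarSmallCodim1974} and \cite[Chapter~I, \S\S~5.1--5.2]{OSSvb80}, which proceed just as you do (Serre's construction with $\sheafExt^1(\calI_Z\otimes\mathcal L,\strSh_{\projective^n})\cong\strSh_Z$, the local-to-global spectral sequence collapsing for $n\geq 3$, and local freeness from the extension class generating every stalk). No discrepancies to report.
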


The cohomology table of a rank-two locally free sheaf $\calE$ 
(i.e., the collection 
$\rank_\Bbbk \homology^i(\calE(j))$ $0 \leq i \leq n$, $j \in \ints$)
that appears in the exact sequence in the above proposition satisfies a
duality, which we now explain.
Let $\calE$ be a rank-two locally free sheaf on $\projective^n$.
Then the dual 
\[
\calE^* := 
\sheafHom_{\strSh_{\projective^n}}(\calE, \strSh_{\projective^n})
\simeq
\calE \otimes_{\strSh_{\projective^n}} (\det \calE)^{-1}.
\]
Assume that $\calE$ is a sheaf that appears in the middle of the exact
sequence in Proposition~\ref{proposition:lcisubcan}.
Then $\det \calE = \strSh_{\projective^n}(-(a_Z + n + 1 ) )$.
Hence $\calE^* = \calE(a_Z + n + 1 )$.
Using Serre duality~\cite[Chapter~III Corollary~7.7]{HartAG} 
we get the following isomorphism:
\begin{equation}
\label{equation:duality}
\homology^i(\calE(j)) \simeq 
\homology^{n-i}(\calE(j)^*(-n-1))' 
\simeq \homology^{n-i}(\calE(a_Z - j))' 
\end{equation}
where $(-)' = \Hom_\Bbbk(-, \Bbbk)$.

We need a relation between the degrees of the minimal generators of a
saturated graded $R$-ideal $J$ and the socle of the first deficiency module
$M_1(Z)$ of the scheme $Z$ defined by $J$. We first prove a lemma.

\begin{lemma}
\label{lemma:tid3}
Let $J$ be a graded $R$-ideal generated minimally in degrees $\delta_1
\leq \cdots \leq \delta_\ell$, with $\ell \geq 3$.
Write $t_i = \min \{j \mid \Tor_i^R(\Bbbk, J)_j \neq 0 \}$.
Then for all $i \geq 2$, $t_i \geq \delta_3+i$.
\end{lemma}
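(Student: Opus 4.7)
The strategy is to separate off the first two generators of $J$ and compare the Tor-modules of $J$ with those of a quotient whose generating degrees are at least $\delta_3$. Set $K := (f_1, f_2) \subseteq J$, and consider the short exact sequence
\[
0 \to K \to J \to J/K \to 0.
\]
The quotient $J/K$ is generated by the images of $f_3, \ldots, f_\ell$, hence in degrees $\geq \delta_3$. By the standard minimality property of graded free resolutions (each free module $F_i$ of a minimal resolution is generated in degrees at least $i$ more than the minimum generating degree of $F_0$), one has $\Tor_i^R(\Bbbk, J/K)_j = 0$ for all $j < \delta_3 + i$. So the proof will be complete if I can produce injections $\Tor_i^R(\Bbbk, J) \hookrightarrow \Tor_i^R(\Bbbk, J/K)$ for every $i \geq 2$.

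These injections will fall out of the long exact $\Tor(\Bbbk, -)$-sequence attached to the short exact sequence above, provided I can show $\Tor_i^R(\Bbbk, K) = 0$ for all $i \geq 2$. This is the one substantive step, and it uses the fact that $R$ is a polynomial ring: any ideal generated by two elements in a UFD has projective dimension at most $1$. Concretely, if $g = \gcd(f_1, f_2)$, then every syzygy of $(f_1, f_2)$ is an $R$-multiple of the single relation $(f_2/g)\, e_1 - (f_1/g)\, e_2 = 0$, so the first syzygy module of $K$ is cyclic and free, and hence $K$ admits a free resolution of length at most $1$.

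The hypothesis $\ell \geq 3$ enters only to guarantee that $J/K$ is nonzero so that $\delta_3$ makes sense. The principal thing to verify carefully is the projective-dimension bound on the two-generated ideal $K$; once that is established, the remainder is a routine chase through the long exact $\Tor$-sequence together with the degree-shift inequality for minimal free resolutions, so I do not expect any further obstacle.
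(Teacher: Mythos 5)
Your proof is correct, and it takes a genuinely different (and arguably cleaner) route than the paper's, although both hinge on the same key fact: the subideal $K=(f_1,f_2)$ generated by the two lowest-degree generators satisfies $\projdim_R K\le 1$, so it cannot contribute to $\Tor_i$ for $i\ge 2$. The paper first reduces to $i=2$ using $t_{i+1}\ge t_i+1$, then invokes Green's Koszul-cohomology description of $\Tor_2^R(\Bbbk,J)_t$ as the homology of one strand of the Koszul complex, and checks by a degree count that in the critical degree $t<\delta_3+2$ this strand only involves $K$, so that $\Tor_2^R(\Bbbk,J)_t=\Tor_2^R(\Bbbk,K)_t=0$. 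You instead apply the long exact sequence of $\Tor^R(\Bbbk,-)$ to $0\to K\to J\to J/K\to 0$: since $\Tor_i^R(\Bbbk,K)=0$ for $i\ge 2$, you get injections $\Tor_i^R(\Bbbk,J)\hookrightarrow\Tor_i^R(\Bbbk,J/K)$, and $\Tor_i^R(\Bbbk,J/K)_j=0$ for $j<\delta_3+i$ because $J/K$ is generated in degrees $\ge\delta_3$. This avoids the Koszul-strand bookkeeping, treats all $i\ge 2$ uniformly without the reduction to $i=2$, and your gcd argument for the freeness of the syzygy module of two elements of a UFD is sound (every relation $af_1+bf_2=0$ is a multiple of $\bigl(f_2/g\bigr)e_1-\bigl(f_1/g\bigr)e_2$ with $g=\gcd(f_1,f_2)$, and this generator has zero annihilator in a domain). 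The only points worth stating explicitly in a final write-up are the standard degree-shift fact for minimal free resolutions that you quote, and the observation that $(J/K)_j=0$ for $j<\delta_3$ because $R$ is nonnegatively graded; neither presents any difficulty.
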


\begin{proof}
If $F_\bullet$ is a minimal graded free resolution of $J$, then 
$t_i = \min\{j \mid \left( F_i \right)_j \neq 0\}$.
Hence we see that $t_{i+1} \geq t_i + 1$ for all $i \geq 0$. 
Thus it suffices to prove the proposition for $i=2$.

Write $t = t_2$. 
Write $V = R_1$.
Consider the Koszul complex
\[
K_\bullet :
0 \rightarrow \bigwedge^{n+1}V \otimes_\Bbbk R(-n-1)
\rightarrow \cdots \to
\bigwedge^{i}V \otimes_\Bbbk R(-i)
\rightarrow \cdots \to
V \otimes_\Bbbk R(-1) \rightarrow R \to 0
\]
which is a minimal free resolution of $\Bbbk = R/R_+$.
Then $\Tor_i^R(\Bbbk,J) = \homology_i(K_\bullet \otimes_R J)$.
From~\cite[\S 1]{GreenTrieste89}, we see that
\[
\Tor_2^R(\Bbbk, J)_{t}  = 
\homology\left(
\bigwedge^3 V \otimes_\Bbbk J_{t-3} \to
\bigwedge^2 V \otimes_\Bbbk J_{t-2} \to 
V \otimes_\Bbbk J_{t-1}
\right).
\]
Now assume, by way of contradiction, that $t < \delta_3+2$. 
Let $J'$ be the ideal
generated by the two generators of $J$ in degrees $\delta_1$ and 
$\delta_2$.
Then $J_j = J'_j$ for all $j \leq t-2$ and $J'_{t-1} \subseteq J_{t-1}$.
Looking at the maps in the above complex (cf.~\cite[\S 1]{GreenTrieste89})
we see that the diagram
\[
\xymatrix{
\bigwedge^3 V \otimes_\Bbbk J'_{t-3} \ar[r] \ar@{=}[d] &
\bigwedge^2 V \otimes_\Bbbk J'_{t-2} \ar[r]^{\alpha'} \ar@{=}[d] &
V \otimes_\Bbbk J'_{t-1} \ar@{^(->}[d]
\\
\bigwedge^3 V \otimes_\Bbbk J_{t-3} \ar[r] &
\bigwedge^2 V \otimes_\Bbbk J_{t-2} \ar[r]^\alpha &
V \otimes_\Bbbk J_{t-1}
}
\]
commutes and that $\ker \alpha' = \ker \alpha$.
Hence $\Tor_2^R(\Bbbk, J)_{t}  = \Tor_2^R(\Bbbk, J')_{t}$.
Since $J'$ is generated by two elements, $\projdim_R J' \leq 1$, so 
$\Tor_2^R(\Bbbk, J')_{t}=0$, yielding the contradiction.
\end{proof}

\begin{proposition}
\label{proposition:socM1Z}
With notation as in Lemma~\ref{lemma:tid3}, assume that $J$ is
saturated. Let $Z \subseteq \projective^n$ be the subscheme defined by $J$.
Then $(\socle(M_1(Z)))_j = 0$ for all $j < \delta_3-2$.
\end{proposition}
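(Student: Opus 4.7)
My plan is to translate the vanishing of the socle of $M_1(Z)$ in low degrees into a bound on the degrees of the minimal generators of an $\Ext$-module via graded local duality, and then to read off that bound from the minimal graded free resolution of $R/J$ by appealing to Lemma~\ref{lemma:tid3}.

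First, I would observe that because $J$ is saturated, $\depth(R/J) \geq 1$, hence $\projdim_R(R/J) \leq n$ by Auslander--Buchsbaum. If the projective dimension is strictly less than $n$, then $\depth(R/J) \geq 2$, so $M_1(Z) = \homology^1_{R_+}(R/J) = 0$ and the conclusion is automatic. I may thus assume $\projdim_R(R/J) = n$.

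Next, graded local duality over the Gorenstein ring $R$ with canonical module $R(-n-1)$ gives
\[
M_1(Z)^{\vee} \;=\; \homology^1_{R_+}(R/J)^{\vee} \;\simeq\; \Ext^n_R(R/J, R(-n-1)),
\]
where $(-)^{\vee}$ denotes the graded Matlis dual. Under this duality $(\socle M_1(Z))_j$ pairs with the degree-$(-j)$ piece of $\Ext^n_R(R/J, R(-n-1))/R_+\Ext^n_R(R/J, R(-n-1))$. So it suffices to show that every minimal generator of $\Ext^n_R(R/J, R(-n-1))$ sits in degree at most $2 - \delta_3$.

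Reading this off a minimal graded free resolution $F_\bullet$ of $R/J$, the minimal generators of $\Ext^n_R(R/J, R(-n-1))$ occur only in the degrees $n+1-a$ as $a$ runs over the twists appearing in $F_n$. So the task reduces to the bound $\min\{a : \beta_{n,a}(R/J) \neq 0\} \geq \delta_3 + n - 1$, and via the standard identification $\Tor^R_i(\Bbbk, R/J) \simeq \Tor^R_{i-1}(\Bbbk, J)$ for $i \geq 2$ this is exactly Lemma~\ref{lemma:tid3} applied with $i = n-1 \geq 2$. I do not anticipate a serious obstacle; the argument is essentially a routine duality computation. The main care is to keep the degree conventions for graded Matlis and local duality straight and to dispose of the case $\projdim_R(R/J) < n$ separately at the outset.
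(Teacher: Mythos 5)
Your argument is correct, but it reaches the key identification by a different route than the paper. The paper sheafifies the Koszul resolution of $\Bbbk$, tensors with $\calI_Z$, and runs the hypercohomology spectral sequence for $\Gamma(\calK_\bullet \otimes \calI_Z(j+n+1))$; comparing the $\homology^1$-row with the $\homology^0$-row (which is the degree-$(j+n+1)$ strand of $K_\bullet \otimes_R J$) gives the equality $(\socle(M_1(Z)))_j = \Tor_{n-1}^R(\Bbbk,J)_{j+n+1}$ directly, and then Lemma~\ref{lemma:tid3} finishes. You instead apply graded local duality to get $M_1(Z)^\vee \simeq \Ext^n_R(R/J, R(-n-1))$, use the Matlis-dual pairing between $\socle$ and minimal generators, and read the generator degrees of $\Ext^n$ off the tail $F_n$ of a minimal free resolution of $R/J$ (here minimality guarantees that $\Ext^n/R_+\Ext^n \simeq \Bbbk \otimes F_n^*(-n-1)$, so the generator degrees are exactly $n+1-a$ for $\beta_{n,a}(R/J) \neq 0$); the shift $\Tor_i^R(\Bbbk, R/J) \simeq \Tor_{i-1}^R(\Bbbk, J)$ then lands you on Lemma~\ref{lemma:tid3} with $i = n-1 \geq 2$, exactly as in the paper. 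The two computations are dual to one another and yield the same numerical identification; yours stays entirely in commutative algebra (local duality plus Auslander--Buchsbaum, including the clean disposal of the case $\projdim_R(R/J) < n$, where $M_1(Z) = 0$), while the paper's stays closer to the sheaf-theoretic setup it uses elsewhere. Both are complete proofs of the proposition.
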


\begin{proof}
Write $\calI_Z$ for the ideal sheaf defined by $J$.
Consider the sheafification $\calK_\bullet$ of the Koszul complex
$K_\bullet$ from the proof of Lemma~\ref{lemma:tid3},
\[
\calK_\bullet\; : \;
0 \rightarrow \bigwedge^{n+1}V \otimes_\Bbbk 
\strSh_{\projective^n}(-n-1) \rightarrow  
\bigwedge^{n}V \otimes_\Bbbk \strSh_{\projective^n}(-n) \rightarrow 
\cdots \rightarrow
V \otimes_\Bbbk \strSh_{\projective^n}(-1) \rightarrow 
\strSh_{\projective^n} \rightarrow  0,
\]
which is a locally free resolution of the $0$ sheaf on ${\projective^n}$.
Hence $\calK_\bullet \otimes_{\strSh_{\projective^n}} \calI_Z$ is exact.
Page 1 of the hyper-cohomology spectral sequence for 
$\Gamma(\calK_\bullet \otimes_{\strSh_{\projective^n}} \calI_Z(j+n+1))$ 
is (with $\displaystyle \bigwedge^i V$ abbreviated as $\bigwedge^i$)
\[
\xymatrix@R=1em{%
\cdots && \ddots & \cdots \\
\bigwedge^{n+1} \otimes_\Bbbk \homology^1(\calI_Z(j)) \ar[r]^-\alpha & 
\bigwedge^{n} \otimes_\Bbbk \homology^1(\calI_Z(j+1))
 \ar[r] & 
\\
\bigwedge^{n+1} \otimes_\Bbbk J_j \ar[r] & 
\bigwedge^{n} \otimes_\Bbbk J_{j+1} \ar[r]^\beta  & 
\bigwedge^{n-1} \otimes_\Bbbk J_{j+2} \ar[r]^\gamma & 
\bigwedge^{n-2} \otimes_\Bbbk J_{j+3} \ar[r]&.
}
\]($J = \homology^0_*(\calI_Z)$ since $J$ is saturated.)
Note that $(\socle(M_1(Z)))_j = \ker \alpha$.
Since the spectral sequence abuts to $0$, we see that 
$\ker \alpha = (\ker \gamma)/(\image \beta)$.
On the other hand the last row of the above page is the strand
corresponding to degree $j+n+1$ of the complex 
$K_\bullet \otimes_R J$.
We conclude that
\[
(\socle(M_1(Z)))_j = \ker \alpha
= (\ker \gamma)/(\image \beta)
= \Tor_{n-1}^R(\Bbbk,J)_{j+n+1}.
\]
Therefore 
\[
\min \{ j \mid (\socle(M_1(Z)))_j \neq 0 \}
= t_{n-1} - (n+1) \geq \delta_3-2,
\]
by Lemma~\ref{lemma:tid3}.
\end{proof}

\begin{lemma}
\label{lemma:RedNoSecNeg}
Let $Z \subseteq \projective^n$ be a closed reduced subscheme without
zero-dimensional components.
Then $\homology^1(\calI_Z(j)) = 0$ for all $j < 0$.
\end{lemma}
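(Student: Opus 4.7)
The plan is to use the standard ideal-sheaf short exact sequence
\[
0 \to \calI_Z \to \strSh_{\projective^n} \to \strSh_Z \to 0.
\]
Since $n \geq 3$, one has $\homology^0(\strSh_{\projective^n}(j)) = 0 = \homology^1(\strSh_{\projective^n}(j))$ for every $j < 0$, so the associated cohomology long exact sequence produces an isomorphism $\homology^1(\calI_Z(j)) \simeq \homology^0(\strSh_Z(j))$ in this range. The task therefore reduces to showing that $\homology^0(\strSh_Z(j)) = 0$ whenever $j < 0$.

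I would next pass to irreducible components. Because $Z$ is reduced, the surjection $\bigsqcup_i Z_i \to Z$ from the disjoint union of the irreducible components of $Z$ induces an injection of sheaves $\strSh_Z \hookrightarrow \bigoplus_i \strSh_{Z_i}$, hence on twisted global sections an injection
\[
\homology^0(\strSh_Z(j)) \hookrightarrow \bigoplus_i \homology^0(\strSh_{Z_i}(j)).
\]
The hypothesis that $Z$ has no zero-dimensional components transfers to each $Z_i$, so one may assume that $Z$ is integral with $\dim Z = d \geq 1$.

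Finally, suppose for contradiction that some nonzero $s \in \homology^0(\strSh_Z(j))$ exists with $j < 0$. Integrality of $Z$ guarantees that $s$ is a nonzerodivisor on $\strSh_Z$, so multiplication by $s$ gives injections $\homology^0(\strSh_Z(m)) \hookrightarrow \homology^0(\strSh_Z(m+j))$ for every $m \in \ints$. Iterating, and setting $N = m + j$, yields $h^0(\strSh_Z(N + k|j|)) \leq h^0(\strSh_Z(N))$ for all $k \geq 0$, bounding $h^0(\strSh_Z(\cdot))$ along an arithmetic progression going to $+\infty$. This contradicts the fact that $h^0(\strSh_Z(m))$ coincides with the Hilbert polynomial of $Z$ for $m \gg 0$, a polynomial of degree $d \geq 1$ with positive leading coefficient and hence unbounded. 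Each step is essentially a formal reduction; the only subtlety is that integrality (and not merely reducedness) is needed for $s$ to be a nonzerodivisor, which is why the decomposition into irreducible components cannot be bypassed.
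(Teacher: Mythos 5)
Your proof is correct, and it takes a genuinely different route from the paper's. Both arguments start identically, using $0 \to \calI_Z \to \strSh_{\projective^n} \to \strSh_Z \to 0$ to reduce the claim to $\homology^0(\strSh_Z(j)) = 0$ for $j<0$. From there you pass to the integral components $Z_i$, use integrality to make a hypothetical nonzero section $s \in \homology^0(\strSh_{Z_i}(j))$, $j<0$, a nonzerodivisor, and contradict the growth of the Hilbert polynomial (degree $\dim Z_i \geq 1$) by showing that multiplication by $s$ makes $h^0(\strSh_{Z_i}(\cdot))$ non-increasing along an arithmetic progression tending to $+\infty$; each of these steps is sound, and you correctly flag that integrality (not mere reducedness) is what makes $s$ a nonzerodivisor. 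The paper instead argues algebraically with the graded ring $\Gamma_*(\strSh_Z)$: its graded pieces vanish in degrees $\ll 0$ (this is where the absence of zero-dimensional components enters), so every element of negative degree is nilpotent; multiplying by enough linear forms lands such an element in the reduced ring $\homology^0(\strSh_Z)$, forcing it to be $R_+$-torsion; and $\Gamma_*$ of a sheaf has no $R_+$-torsion. The trade-off: your argument is more geometric and self-contained (Serre vanishing plus Hilbert polynomials), but requires the decomposition into irreducible components; the paper's argument never decomposes $Z$ and uses reducedness only through the reducedness of the ring of global sections, but leans on the boundedness below of $\Gamma_*(\strSh_Z)$, which is itself a small finiteness argument left implicit. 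Either proof establishes the lemma.
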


\begin{proof}
It suffices to show that 
$\homology^0(\strSh_Z(j)) = 0$ for all $j < 0$.
Consider the ring $\Gamma_*(\strSh_Z)$.
Note that 
$\displaystyle \left(\Gamma_*(\strSh_Z)\right)_j = 0$ for all $j \ll 0$. Hence 
$\displaystyle \bigoplus_{j<0} \left(\Gamma_*(\strSh_Z)\right)_j$ is
nilpotent.
On the other hand, it is $R_+$-torsion, since $\homology^0(\strSh_Z)$ is a
reduced ring. However, 
$\homology^0_{R_+}(\Gamma_*(\strSh_Z)) = 0$.
\end{proof}

If $S$ is a noetherian ring and $M$ a finitely generated $S$-module, we say
that $M$ \define{satisfies the condition $(S_k)$} if for all $\frakp \in
\Spec S$, 
\[
\depth_{S_\frakp}(M_\frakp) \geq \min \{k, \dim S_\frakp\}.
\]
We need the following characterization of the conditions $(S_k)$;
see~\cite[\S 1.2]{SchenzelUseLC1998} for a proof.

\begin{proposition}
\label{proposition:SkAndExt}
Let $S$ be a regular local ring and $M$ a finitely generated $S$-module.
Let $k \geq 1$.
Then $M$ satisfies $(S_k)$ if and only if for all $i \geq 1$, $\codim_S
\Ext_S^i(M,S ) \geq i+k$.
\end{proposition}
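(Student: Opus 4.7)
The plan is to reduce the statement to a pointwise assertion at each prime $\frakp$ of $S$, and then use the Auslander--Buchsbaum formula available over the regular local ring $S_\frakp$.

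First I would observe that both conditions in the statement are local: $(S_k)$ is stated in terms of a condition at each prime, while $\codim_S \Ext^i_S(M,S) \geq i+k$ is equivalent to the assertion that, for every prime $\frakp$ of $S$ and every $i \geq 1$, one has $\height \frakp \geq i+k$ whenever $\Ext^i_S(M,S)_\frakp \neq 0$. Since $\Ext$ commutes with localization, this translates into a condition on $\Ext^i_{S_\frakp}(M_\frakp, S_\frakp)$, and it suffices to compare the two conditions prime by prime.

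Next, over the regular local ring $S_\frakp$, the Auslander--Buchsbaum formula gives
\[
\depth_{S_\frakp} M_\frakp + \projdim_{S_\frakp} M_\frakp = \dim S_\frakp,
\]
so the condition $\depth_{S_\frakp} M_\frakp \geq \min\{k, \dim S_\frakp\}$ is equivalent to the projective-dimension bound $\projdim_{S_\frakp} M_\frakp \leq \max\{0, \dim S_\frakp - k\}$. On the other hand, a minimal free resolution argument shows that over any Noetherian local ring $(A,\frakn)$ one has $\projdim_A N = \max\{i \geq 0 \mid \Ext^i_A(N,A) \neq 0\}$ for every finitely generated $N$ of finite projective dimension, because the differentials of a minimal resolution have entries in $\frakn$ and dualization by $A$ therefore preserves the top nonzero term. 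Applied to $M_\frakp$, this converts the projective-dimension bound into the vanishing statement $\Ext^i_{S_\frakp}(M_\frakp, S_\frakp) = 0$ for every $i \geq 1$ with $\dim S_\frakp < i+k$.

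Finally I would assemble the equivalence: the last vanishing, imposed at every prime $\frakp$, says precisely that for every $i \geq 1$ and every $\frakp \in \Supp \Ext^i_S(M,S)$ one has $\height \frakp \geq i+k$, which is the codimension bound $\codim_S \Ext^i_S(M,S) \geq i+k$. The main subtlety I expect is the regime $\dim S_\frakp < k$, where $(S_k)$ demands that $M_\frakp$ be free; this is absorbed automatically by the chain of equivalences since such a $\frakp$ forces $\projdim_{S_\frakp} M_\frakp = 0$ and hence the vanishing of all higher $\Ext$'s after localization. Beyond this point the proof is a careful unwinding of definitions.
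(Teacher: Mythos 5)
Your proof is correct. Note that the paper does not actually prove this proposition --- it is quoted with a pointer to Schenzel's \emph{On the use of local cohomology in algebra and geometry}, \S 1.2 --- so there is no in-text argument to compare against; your write-up is a complete, self-contained substitute. The chain of reductions is sound: $\Ext$ commutes with localization for finitely generated modules over a noetherian ring, so $\codim_S \Ext^i_S(M,S)\ge i+k$ is exactly the statement that $\Ext^i_{S_\frakp}(M_\frakp,S_\frakp)=0$ whenever $i\ge 1$ and $\height\frakp< i+k$; Auslander--Buchsbaum over the regular local ring $S_\frakp$ converts $\depth_{S_\frakp}M_\frakp\ge\min\{k,\dim S_\frakp\}$ into $\projdim_{S_\frakp}M_\frakp\le\max\{0,\dim S_\frakp-k\}$; and the identity $\projdim_A N=\max\{i\mid \Ext^i_A(N,A)\ne 0\}$ for modules of finite projective dimension closes the loop. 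For the last point your one-line justification is the right one, though it deserves one more sentence: if $F_\bullet$ is a minimal resolution of length $p$, then $\Ext^p_A(N,A)=\coker\bigl(\Hom(F_{p-1},A)\to\Hom(F_p,A)\bigr)$, and since the transposed matrix still has entries in $\frakn$, this cokernel surjects onto $\Hom(F_p,A)\otimes_A A/\frakn\ne 0$ by Nakayama. You also correctly handle the degenerate regimes ($M_\frakp=0$, and $\dim S_\frakp<k$ forcing freeness), and you implicitly use the standard convention that the zero module has infinite codimension, which is needed for the statement to be read correctly when $\Ext^i_S(M,S)=0$. The only stylistic remark is that Schenzel's own treatment is phrased via local duality over a Gorenstein ring, whereas your argument is the more elementary homological one available in the regular case; for the purposes of this paper either suffices.
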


\begin{lemma}
\label{lemma:S2UFDfree}
Let $S$ be a UFD and $M$ a rank-one $S$-module satisfying $(S_2)$. 
Then $M$ is free.
\end{lemma}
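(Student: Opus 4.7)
The plan is to realize $M$ as a nonzero ideal of $S$, enclose it in its principal divisorial hull, and use the $(S_2)$ condition to force the two to coincide; the UFD hypothesis encodes the triviality of the divisor class group and guarantees that the hull is principal. Throughout let $K$ denote the fraction field of $S$. Since $M$ has rank one and is torsion-free (a consequence of $(S_1)$), the natural map $M \to M \otimes_S K \simeq K$ is injective; its image is a finitely generated $S$-submodule of $K$, so after clearing denominators we may assume $M = I$ for a nonzero ideal $I \subseteq S$.

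For each height-one prime $\frakp$ of $S$, the UFD hypothesis yields $\frakp = (\pi_\frakp)$ with $\pi_\frakp$ prime, so $S_\frakp$ is a discrete valuation ring and $I_\frakp = \pi_\frakp^{n_\frakp} S_\frakp$ for a unique integer $n_\frakp \geq 0$; moreover $n_\frakp = 0$ outside the finite set $\{\frakp_1, \ldots, \frakp_r\}$ of height-one primes containing $I$. Set $I' := (\pi_{\frakp_1}^{n_{\frakp_1}} \cdots \pi_{\frakp_r}^{n_{\frakp_r}}) \subseteq S$. Since $I'$ is principal, it is divisorial in the normal domain $S$, i.e., $I' = \bigcap_{\height \frakp = 1} I'_\frakp$ inside $K$. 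By construction $I_\frakp = I'_\frakp$ at each height-one prime, whence $I \subseteq I'$, and the quotient $I'/I$ is supported in codimension $\geq 2$.

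It remains to force $I = I'$ via a depth computation. First, $S$ itself satisfies $(S_2)$: factoring an arbitrary nonzero $f \in S$ into primes exhibits $(f) = \bigcap_i (\pi_i^{a_i})$ as a primary decomposition with only height-one associated primes, so $S/(f)$ has no embedded primes, which (together with prime avoidance at height-$\geq 2$ primes) is equivalent to $(S_2)$ on $S$. Now suppose $I'/I \neq 0$ and pick $\frakp$ minimal in $\Supp(I'/I)$, so $\height \frakp \geq 2$ and $(I'/I)_\frakp$ has finite length over $S_\frakp$, giving $\depth_{S_\frakp}(I'/I)_\frakp = 0$. On the other hand $I'_\frakp$ is a free $S_\frakp$-module and so has depth $\geq 2$ (since $S$ is $(S_2)$), and $I_\frakp$ has depth $\geq 2$ (since $I$ is $(S_2)$); the depth lemma applied to $0 \to I_\frakp \to I'_\frakp \to (I'/I)_\frakp \to 0$ forces $\depth_{S_\frakp}(I'/I)_\frakp \geq 1$, a contradiction. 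Therefore $M \simeq I = I'$ is principal, and hence free of rank one.

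The main obstacle is constructing the principal envelope $I'$ and justifying that $I'/I$ is supported in codimension $\geq 2$; the UFD hypothesis enters decisively here, as it is precisely this input that makes the divisorial hull principal (the cohomological content being $\mathrm{Cl}(S)=0$). Once $I'$ is in place, the rest is a routine depth chase, which also requires the separate observation that a UFD is itself $(S_2)$.
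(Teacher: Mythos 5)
Your proof is correct, and it follows the same basic strategy as the paper --- embed $M$ as an ideal $I\subseteq S$, use $(S_2)$ to show that $I$ is determined by its behaviour at height-one primes, and use factoriality to conclude principality --- but the execution of the second step is genuinely different. The paper argues directly with associated primes: for $\frakp \in \Ass(S/I)$, the sequence $0 \to I_\frakp \to S_\frakp \to (S/I)_\frakp \to 0$ forces $\depth_{S_\frakp} I_\frakp = 1$, so $(S_2)$ gives $\height\frakp = 1$; hence $I$ is unmixed of height one, and an unmixed height-one ideal in a UFD is principal (each primary component is a power of a principal prime). Your construction of the principal divisorial hull $I'$ together with the depth chase on $0 \to I_\frakp \to I'_\frakp \to (I'/I)_\frakp \to 0$ is in effect an unpacking of that last assertion: the equality $I = I'$ is precisely the statement that $I$ coincides with the intersection of its height-one primary components, i.e.\ that $S/I$ has no embedded primes. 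What your version buys is self-containedness --- you prove rather than quote that the divisorial hull is principal, and you verify along the way that a UFD satisfies $(S_2)$ --- at the cost of length; the paper's version is three lines because it reads the unmixedness straight off $\Ass(S/I)$ without introducing $I'$. Both arguments invoke the UFD hypothesis at the same point (height-one primes, equivalently divisorial ideals, are principal), and both are complete.
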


\begin{proof}
Embed $M$ as an $S$-ideal $I$.
Let $\frakp \in \Ass (S/I)$. Then $\depth_{S_\frakp } M_\frakp = 1$, so
$\height \frakp = 1$. In other words, $I$ is an unmixed height-one ideal.
Since $S$ is a UFD, $I$ is principal, i.e., $M$ is free.
\end{proof}

Let $S$ be a $d$-dimensional $\naturals$-graded $\Bbbk$-algebra, with
homogeneous maximal ideal $R_+$. The \define{$a$-invariant} of $S$, denoted
by $a(S)$ is
\[
\max\{j \mid \homology^d_{S_+}(S)_j \neq 0 \}.
\]

\begin{proposition}
\label{proposition:aInvDim}
Let $S$ be a standard-graded $\Bbbk$-algebra.
Then $a(S) \geq -\dim S$.
\end{proposition}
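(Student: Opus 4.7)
The plan is to use graded Noether normalization together with graded local duality, reducing the claim to the existence of a nonzero graded $A$-linear map $S \to A$ of nonpositive degree. After a flat base change to an infinite extension of $\Bbbk$ (which preserves local cohomology and the $a$-invariant), pick a graded Noether normalization $A := \Bbbk[\theta_1, \ldots, \theta_d] \subseteq S$ with each $\theta_i \in S_1$; this is possible since $S$ is standard-graded. Because $\sqrt{(\theta_1,\ldots,\theta_d)S} = S_+$, we have $\homology^d_{S_+}(S) = \homology^d_{A_+}(S)$. Graded local duality over the Gorenstein polynomial ring $A$ (whose graded canonical module is $A(-d)$) identifies the graded Matlis dual of $\homology^d_{A_+}(S)$ with $\Hom_A(S, A)(-d)$, so the claim $a(S) \geq -d$ is equivalent to exhibiting a nonzero graded $A$-linear homomorphism $\phi \colon S \to A$ of degree at most zero.

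To produce $\phi$, we first reduce to the case when $S$ is a graded domain. Pick a minimal prime $\frakp$ of $S$ with $\dim(S/\frakp) = d$: the long exact sequence of local cohomology attached to $0 \to \frakp \to S \to S/\frakp \to 0$, combined with Grothendieck vanishing of $\homology^{d+1}_{S_+}$, yields a surjection $\homology^d_{S_+}(S) \twoheadrightarrow \homology^d_{S_+}(S/\frakp)$, and hence $a(S) \geq a(S/\frakp)$. When $S$ is a graded domain with $A$ a normal Noether normalization, the trace map $\Trace \colon S \to A$ (well-defined because $A$ is integrally closed and $S$ is integral over $A$) is a graded, degree-preserving $A$-linear map with $\Trace(1) = [\mathrm{Frac}(S) : \mathrm{Frac}(A)] \neq 0$ in characteristic zero, giving the required $\phi$.

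The main obstacle is this final construction of a nonzero degree-preserving $A$-linear map $S \to A$ when $\Bbbk$ has positive characteristic dividing the generic rank, where the trace vanishes and one needs a more delicate argument---for instance, using a dual basis of $S \otimes_A \mathrm{Frac}(A)$ over $\mathrm{Frac}(A)$ and clearing homogeneous denominators to stay in $A$.
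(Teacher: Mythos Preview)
Your reformulation via graded local duality is correct: after Noether normalization $A\subseteq S$, the inequality $a(S)\geq -d$ is equivalent to $\Hom_A(S,A)$ having a nonzero homogeneous element of degree $\leq 0$. The reduction to a domain via a minimal prime of maximal dimension is also fine, and in characteristic zero the trace gives exactly such an element in degree $0$.

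The gap is precisely where you flag it. In positive characteristic the trace can vanish identically (e.g.\ when the residue-field extension is purely inseparable), and your proposed fix---pick a homogeneous $K$-linear functional on $S\otimes_A K$ and clear denominators---does not produce a map of the required degree: multiplying by a nonzero homogeneous $f\in A$ to force the image into $A$ \emph{raises} the degree of the map by $\deg f\geq 0$, so you end up with an element of $\Hom_A(S,A)$ in some nonnegative degree, which is not what you need. There is no evident way to push the degree down again.

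The paper closes this gap in one stroke, and in doing so bypasses both the reduction to domains and the detour through local duality. It invokes Hochster's theorem that a regular ring in equi-characteristic is a direct summand of every module-finite extension: the graded inclusion $A\hookrightarrow S$ then admits a graded $A$-linear retraction of degree $0$, so $\homology^d_{A_+}(A)\to\homology^d_{A_+}(S)=\homology^d_{S_+}(S)$ is split injective and $a(S)\geq a(A)=-d$. In your language, the retraction is exactly the missing nonzero element of $\Hom_A(S,A)_0$. So the two approaches converge on the same key input; the paper just names it directly.
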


\begin{proof}
Write $d = \dim S$.
Since the base-change of $S$ along a field extension $\Bbbk \to \Bbbk'$ is
faithfully flat, we may assume that $\Bbbk$ is infinite.
Hence there exists a Noether normalization $A$ that is a standard-graded
polynomial ring over $\Bbbk$, generated by $d$ elements of $S_1$.
Since a regular ring in equi-characteristic is a direct summand of every
finite extension~\cite{HochsterContracted1973} we see that the induced map
\[
\homology^d_{A_+}(A) \to 
\homology^d_{S_+}(S) 
\]
splits, so $a(S) \geq a(A) = - \dim A = -\dim S$.
\end{proof}

\begin{discussionbox}
\label{discussionbox:engheta}
(See~\cite[Proposition~11]{EnghetaPDUnmThreeCubics2007}.)
Let $\frakp := (x_0, x_1) \subseteq R$. Let $J$ be a homogeneous
$\frakp$-primary ideal such that the multiplicity $e(R/J) = 2$.
Then, after a homogeneous change of coordinates, if necessary, 
$J = (x_0, x_1^2)$ or $J =  \frakp^2 + (g_0x_0 + g_1x_1)$
where $g_0$ and $g_1$ are homogeneous polynomials of (the same)
positive degree such that $x_0, x_1, g_0, g_1$ is a regular sequence.
In particular, $\projdim_R R/J \leq 3$.
\end{discussionbox}

For a homogeneous $R$-ideal $J$, we write $[J]_d$ for the
$\Bbbk$-vector space of the homogeneous elements of $J$ of degree $d$.

\begin{proposition}
\label{proposition:mult3}
Let $\frakp := (x_0, x_1) \subseteq R$. Let $J$ be a homogeneous
$\frakp$-primary ideal such that the multiplicity $e(R/J) = 3$ 
and $\rank_\Bbbk[J]_2 \geq 3$.
Then $\projdim_R R/J \leq 3$.
\end{proposition}

\begin{proof}
If $\frakp^2 \subseteq J$, then $J = \frakp^2$, in which case $\projdim_R R/J 
= 2$.
Otherwise, $JR_\frakp \not \subset (\frakp R_\frakp)^2$; therefore
$\frakp^3 \subseteq J$ and $J \not \subset \frakp^2$.
Hence $[J]_2 \cap \frakp^2$ has dimension at most $2$, so
there exist linear forms $l_0, l_1$ such that $h := l_0x_0  + l_1x_1
\in [J]_2 \subseteq J$ with $l_0 \not \in \frakp$ or $l_1 \not \in \frakp$.
Write $J_1 = \frakp^3 + (l_0x_0  + l_1x_1)$.
Note that $J$ is the $\frakp$-primary component of $J_1$.

Let $R' := \Bbbk[ x_0, x_1, l_0, l_1]$. 
For ideals of $R$, write $(-)' = - \cap R'$.
Then $J_1 = J_1' R$. Write $J_1' = \fraka \cap \frakb$ 
where $\fraka$ is $\frakp'$-primary and 
$\frakb$ is the intersection of the components associated to the remaining
associated prime ideals of $J_1'$.
Since $R' \to R$ is faithfully flat, 
$J_1 = J_1'R = \fraka R \cap \frakb R$.
Hence $J = J_1R_\frakp \cap R = \fraka R_\frakp \cap R$.
(Note that there exists $f \in R' \minus \frakp' \subseteq R \minus \frakp$ 
such that $f^m \in \frakb$ for all $m \gg 0$, so $\frakb R_\frakp =
R_\frakp$.) 

Since $R$ is faithfully flat over $R'$ and $\fraka$ is $\frakp'$-primary,
we know (from~\cite[(9.B)]{MatsCA80}) that
\[
\Ass_R (R/\fraka R) = \Ass_R (R/\frakp' R) = \{\frakp \}.
\]
Hence $J = \fraka R_\frakp \cap R =\fraka R$, from which it follows that
$\projdim_R R/J = \projdim_{R'} R'/\fraka < \depth R' = 4$, since 
the homogeneous maximal ideal of $R'$ is not associated to 
$R'/\fraka$.
\end{proof}

\section{A construction}
\numberwithin{equation}{section}
\label{section:construction}

As in the Introduction, let $X \subseteq \projective^n$, $n \geq 4$, 
be a subcanonical
local complete intersection scheme of codimension two with $\omega_X =
\strSh_X(a_X)$.
Then we have an exact sequence (Proposition~\ref{proposition:lcisubcan})
\[
0 \to \strSh_{\projective^n}(-(a_X+n+1)) \to \calE \to \calI_X \to 0.
\]
Applying $\Gamma_*(-)$ we get an exact sequence of $R$-modules (with $M :=
\Gamma_*(\calE)$):
\begin{equation}
\label{equation:MIX}
0 \to R(-(a_X+n+1)) \to M \stackrel{\beta}\to I_X \to 0.
\end{equation}
Note that $M$ is generated (not necessarily minimally) in degrees $d_i, 1
\leq i \leq m$ and $a_X+n+1$.

\begin{lemma}
\label{lemma:Mdual}
$M^* \simeq M(a_X +n + 1)$.
\end{lemma}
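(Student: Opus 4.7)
The plan is to transfer the self-duality $\calE^* \simeq \calE(a_X+n+1)$, recorded in the discussion preceding equation~(\ref{equation:duality}), from sheaves to $R$-modules via $\Gamma_*$. Since $\Gamma_*$ commutes with Serre twists, that sheaf isomorphism immediately gives $\Gamma_*(\calE^*) \simeq M(a_X+n+1)$, so it suffices to identify $M^*$ with $\Gamma_*(\calE^*)$.

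There is a natural graded map $M^* \to \Gamma_*(\calE^*)$: a degree-$d$ $R$-linear homomorphism $M \to R(d)$ sheafifies to a section of $\sheafHom_{\strSh_{\projective^n}}(\calE, \strSh_{\projective^n}(d)) \simeq \calE^*(d)$. Its kernel and cokernel fit into the standard four-term exact sequence
\[
0 \to \homology^0_{R_+}(M^*) \to M^* \to \Gamma_*(\widetilde{M^*}) \to \homology^1_{R_+}(M^*) \to 0,
\]
so the claim reduces to showing $\depth_{R_+}(M^*) \geq 2$.

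For the depth bound, take any finite free presentation $R^a \stackrel{\phi}\to R^b \to M \to 0$ and dualize to obtain a short exact sequence $0 \to M^* \to R^b \to \image(\phi^T) \to 0$. Because $\image(\phi^T)$ embeds in $R^a$ it is torsion-free, hence of depth at least $1$ at $R_+$; the depth lemma then forces $\depth_{R_+} M^* \geq \min(n+1,2)=2$, using $n \geq 3$. The main obstacle is minor bookkeeping: one must check that the sheafification-based map above coincides with the one appearing in the four-term local cohomology sequence, but this is tautological from the universal property of $\Gamma_*$.
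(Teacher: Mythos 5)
Your proof is correct and follows essentially the same route as the paper: transfer the self-duality $\calE^* \simeq \calE(a_X+n+1)$ through $\Gamma_*$ and identify $M^*$ with $\Gamma_*(\calE^*)$ via the four-term local cohomology sequence. The only difference is cosmetic: where the paper simply cites that $M^*$ satisfies $(S_2)$, you supply the standard argument (dualizing a free presentation and applying the depth lemma) that $\depth_{R_+} M^* \geq 2$, which is exactly what the four-term sequence requires.
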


\begin{proof}
Since $\det \calE = \strSh_{\projective^n}(-(a_X+n+1))$, we see that
$\calE^* \simeq \calE(a_X+n+1)$ and, hence, that
the sheafification of $M^*$ is $\calE(a_X+n+1)$.
Since $M^*$ satisfies $(S_2)$, $M^* = \Gamma_*(\calE(a_X+n+1))$. This
follows from the four-term exact sequence connecting local cohomology and
$\Gamma_*$; see, e.g.,~\cite[Theorem~A4.1]{eiscommalg}.
\end{proof}

Now suppose that $X$ is not a complete intersection.
We construct a local complete intersection $Z \subseteq
{\projective^n}$ of codimension two as follows.
Let $\xi \in M$ be a homogeneous element of degree $d_1$ that maps to a
non-zero element of $I_X$ under the map $\beta$ from~\eqref{equation:MIX}.
(The image of $\xi$ is part of a minimal homogeneous generating set of
$I_X$.)

\begin{lemma}
\label{lemma:Mmodxi}
With notation as above, assume that $2d_1 \leq a_X +n + 2$. 
Further, write $i$ for the inclusion map $R\xi \to M$.
Let $\displaystyle J = \Ann_R\left(\coker \left(M^*
\stackrel{i^*}{\to}R(d_1)\right)\right)$.
Then 
\begin{enumerate}

\item
\label{lemma:Mmodxi:S1}
$M/R\xi$ satisfies $(S_1)$.

\item
\label{lemma:Mmodxi:NdblDual}
$(M/R\xi)^{**} = R(d_1-(a_X + n + 1))$.

\item
\label{lemma:Mmodxi:Junm}
$J$ is a unmixed height-two ideal and $J_\frakp$ is a complete intersection
for all $\frakp \neq R_+$.

\end{enumerate}
\end{lemma}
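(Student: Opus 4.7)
The plan is to apply the Serre construction to the section $\xi \in M_{d_1} = \Gamma(\calE(d_1))$. Once we know that the zero scheme $Z(\xi) \subseteq \projective^n$ is a nonempty codimension-two local complete intersection, all three conclusions follow formally from the Koszul-type resolution that the Serre construction provides.

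The pivotal preliminary, where the hypothesis $2d_1 \leq a_X + n + 2$ is used, is proving that $\xi$ is primitive at every height-one prime of $R$. Since $R$ is a UFD, such a prime is of the form $(a)$ for some irreducible homogeneous $a$. Suppose $\xi \in a M_{(a)}$; since $M = \Gamma_*(\calE)$ satisfies $(S_2)$ and is thus determined by its values at height-one primes, the rational quotient $\xi/a$ extends to an element $\xi' \in M_{d_1 - \deg a}$, and $\beta(\xi) = a\,\beta(\xi')$ in $I_X$. If $\beta(\xi') \neq 0$, we obtain a nonzero element of $I_X$ in degree $d_1 - \deg a < d_1$, contradicting the minimality of $d_1$. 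Otherwise $\xi' \in \ker\beta = R\eta$, forcing $\deg\xi' \geq a_X + n + 1$, i.e.\ $\deg a \leq d_1 - a_X - n - 1 \leq 1 - d_1 \leq 0$ by the hypothesis, contradicting $\deg a \geq 1$. Because $X$ is not a complete intersection, $\calE$ does not split (Proposition~\ref{proposition:lcisubcan}), so $\xi$ must vanish somewhere; combined with primitivity, $Z(\xi)$ is nonempty and of codimension exactly two, cut out locally by the regular sequence $(a,b)$ that represents $\xi$ in a local trivialization of $\calE$.

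For (a), at every $\frakp \neq R_+$ the stalk $M_\frakp$ is $R_\frakp$-free of rank two; by primitivity, at a height-one prime $\xi_\frakp$ lies in a basis, so $N_\frakp := (M/R\xi)_\frakp$ is free of rank one, while at primes of larger height the depth lemma applied to $0 \to R(-d_1) \to M \to N \to 0$ yields $\depth N_\frakp \geq \height\frakp - 1 \geq 1$. At $\frakp = R_+$, the $(S_2)$-property of $M$ gives $\depth N_{R_+} \geq \min\{2, n\} \geq 1$. For (b) and (c), Serre's construction applied to $\xi$ supplies the sheaf resolution
\[
0 \to \strSh(a_X + n + 1 - 2d_1) \to \calE(a_X + n + 1 - d_1) \to \calI_{Z(\xi)} \to 0.
\]
Applying $\Gamma_*$ (valid because $\homology^1(\projective^n, \strSh(j)) = 0$ for all $j$), using the identification $M^* \simeq M(a_X + n + 1)$ from Lemma~\ref{lemma:Mdual} and shifting by $d_1$, we obtain the exact sequence
\[
0 \to R(a_X + n + 1 - d_1) \to M^* \xrightarrow{i^*} R(d_1) \to (R/I_{Z(\xi)})(d_1) \to 0.
\]
Thus $\coker(i^*) = (R/I_{Z(\xi)})(d_1)$ and $J = I_{Z(\xi)}$, the saturated ideal of a codimension-two local complete intersection subscheme, hence unmixed of height two and locally a complete intersection at every $\frakp \neq R_+$, proving (c). Dualizing $0 \to R(-d_1) \to M \to N \to 0$ identifies $N^* = \ker(i^*) = R(a_X + n + 1 - d_1)$, so $N^{**} = R(d_1 - (a_X + n + 1))$, proving (b). The main obstacle is the primitivity argument; the rest is a formal consequence of the Serre construction together with Lemma~\ref{lemma:Mdual}.
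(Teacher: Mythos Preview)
Your argument is correct. The undefined symbol $\eta$ (presumably the generator of $\ker\beta$ in degree $a_X+n+1$) and the identification of the Serre--Koszul map with $i^*$ under $M^*\simeq M(a_X+n+1)$ could be spelled out, but both are routine.

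Your route differs from the paper's. You first prove that $\xi$ is primitive at every height-one prime, then invoke the Serre construction to realise $J$ directly as the saturated ideal $I_{Z(\xi)}$ of a codimension-two local complete intersection; parts~(a)--(c) then drop out of the Koszul resolution. The paper instead argues purely on the algebraic side: it studies the exact sequence $0\to N^*\to M^*\xrightarrow{i^*} R(d_1)\to\Ext^1_R(N,R)\to\cdots$, shows $N^*$ is free of rank one (via Lemma~\ref{lemma:S2UFDfree}), and then uses the degree structure of the minimal generators of $M^*\simeq M(a_X+n+1)$ together with $2d_1\le a_X+n+2$ to locate a minimal generator $\eta$ of $M^*$ inside $N^*$, concluding $N\simeq (\image i^*)(-(a_X+n+1))$ is torsion-free. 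Both proofs feed the numerical hypothesis into the same isomorphism $M^*\simeq M(a_X+n+1)$, but yours packages the conclusion geometrically (primitivity $\Rightarrow$ $Z(\xi)$ has pure codimension two), which makes the local-complete-intersection statement in~(c) immediate, whereas the paper verifies unmixedness and the local-complete-intersection property by localising the free resolution $0\to N^*_\frakp\to M^*_\frakp\to R_\frakp\to (R/J)_\frakp\to 0$ at each $\frakp\ne R_+$.
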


\begin{proof}
Write $N=M/R\xi$. Then we have an exact sequence
\[
0 \to N^* \to M^* \stackrel{i^*}\to R(d_1) \to \Ext^1_R(N,R) \to 
\Ext^1_R(M,R).
\]
By Proposition~\ref{proposition:SkAndExt}, we need to show that 
$\codim_R(\Ext_R^i(N,R)) \geq i+1$.
Since $M$ satisfies $(S_2)$ and 
$\Ext_R^i(M,R )$ is a finite-length $R$-module for every $i>0$, 
it suffices to show that $\codim_R(\Ext_R^1(N,R)) \geq 2$ or,
equivalently, that $\height J \geq 2$.
It is immediate that $\height J >0$.
Note also that $N^*$ is free, since it is a rank-one module satisfying
$(S_2)$ (Lemma \ref{lemma:S2UFDfree}).

\eqref{lemma:Mmodxi:S1}. First assume that $J$ contains a linear form $l$.
If $\height J = 1$, then $J = (l)$, from which it would follow that
$\image i^*$ is a free $R$-module and that $M^*$ and $M$ are free.
This is a contradiction, so $\height J \geq 2$, which
proves~\eqref{lemma:Mmodxi:S1}.
Hence we may assume that $J$ does not contain a linear form, or,
equivalently, that $\image i^*$ is zero in degree $\leq -d_1+1$.

Since $M^* \simeq M(a_X +n + 1)$ (Lemma~\ref{lemma:Mdual}),
it is generated in degrees
$d_i - (a_X + n + 1)$, $1 \leq i \leq m$ and $0$.
Since $2d_1 \leq a_X + n + 2$, we see that
$d_1 - (a_X + n + 1) \leq -d_1+1$.
Therefore $N^*$ maps on to a minimal generator $\eta$ of $M^*$ degree 
$d_1 - (a_X + n + 1)$ and the remaining minimal generators of $M^*$ 
have degree $\geq -d_1+2$. 
It follows that 
$M^*/R\eta = \image i^* \simeq N(a_X+n+1)$, so $N$ satisfies $(S_1)$.

\eqref{lemma:Mmodxi:NdblDual}:
$N^* = R(a_X+n+1-d_1)$, by the above argument. 

\eqref{lemma:Mmodxi:Junm}:
By~\eqref{lemma:Mmodxi:S1}, $\height J \geq 2$.
Let $\frakp \in \Ass R/J$. 
Since $\depth M^* \geq 2$ and $\depth R \geq 3$, 
we see that $\frakp \neq R_+$.
Hence we have a free resolution
\[
0 \to N_\frakp^* \to M_\frakp^* \to R_\frakp \to  0
\]
of $(R/J)_{\frakp}$ over $R_\frakp$. Since $\depth_{R_\frakp } 
(R/J)_{\frakp} = 0$, we see from the Auslander-Buchsbaum formula that
$\height \frakp \leq 2$. Since $\height J \geq 2$, we conclude that
$\height \frakp = 2$. Hence $J$ is a unmixed height-two ideal.
Further for all $\frakp \neq R_+$, $M_\frakp^*$ is a rank-two free
$R_\frakp$-module, so $J_\frakp$ is a complete intersection.
\end{proof}

\begin{definition}
\label{definition:Z}
Let $Z = \Proj(R/J)$, where $J$ is the ideal from
Lemma~\ref{lemma:Mmodxi}. Write $\calI_Z$ for its ideal sheaf.
\end{definition}

\section{Proofs}
\label{section:proofs}

We start with some observations about the scheme $Z$ constructed in the
previous section.

\begin{discussionbox}
\label{discussionbox:d1di}
We noted in the proof of Lemma~\ref{lemma:Mmodxi}\eqref{lemma:Mmodxi:S1}
that the minimal generators of $M^*$ other than the one of degree 
$d_1 - (a_X+n+1)$ have degree $\geq -d_1+2$.
Corresponding to the minimal generators of $I$, 
$M$ has minimal generators in degrees $d_i, 1\leq i \leq m$.
Hence $M^*$ has minimal generators
in degrees $d_i-(a_X+n+1), 1\leq i \leq m$. 
Hence $d_1 + d_i \geq a_X+n+3$ for all $2 \leq i \leq m$.
Further, the minimal generators of $M^*$ of degree 
$d_i-(a_X+n+1), 2\leq i \leq m$ induce 
minimal generators of $J$ in degrees 
$d_1+d_i-(a_X+n+1), 2\leq i \leq m$.
\end{discussionbox}

\begin{proposition}
\label{proposition:Zsubc}
The scheme $Z$ defined in Definition~\ref{definition:Z}
is a subcanonical codimension-two local complete intersection subscheme 
of $\projective^n$, with $\omega_Z = \strSh_Z(2d_1-a_X-2n-2)$.
\end{proposition}

\begin{proof}
The exact sequence in the proof of Lemma~\ref{lemma:Mmodxi} gives the
following exact sequence (still in the notation of~Lemma~\ref{lemma:Mmodxi}
and its proof):
\[
0 \to N^* \to M^* \to J(d_1) \to 0.
\]
This gives (see the proof of
Lemma~\ref{lemma:Mmodxi}\eqref{lemma:Mmodxi:NdblDual}) the exact sequence
\begin{equation}
\label{equation:sesForIZ}
0 \to \strSh_{\projective^n}(a_X + n + 1 -2d_1) \to \calE(a_X + n + 1-d_1)
\to \calI_Z \to 0.
\end{equation}
Now use Proposition~\ref{proposition:lcisubcan}.
\end{proof}

\begin{proof}[Proof of Theorem~\protect{\ref{theorem:minHOne}}]
By way of contradiction, 
assume that $X$ is not a complete intersection.  
Let $Z$ be the l.c.i subscheme of $\projective^n$ defined in
Definition~\ref{definition:Z}.
Let $\epsilon = \min \{ j \mid \homology^0(\strSh_Z(j)) \neq 0\}$.

We first argue that $\epsilon \geq  2d_1 - a_X -n -2$.
Since $2d_1 \leq a_X +n +2$, we may assume that $\epsilon < 0$.
Therefore $\homology^1(\calI_Z(\epsilon)) \neq 0$.
From~\eqref{equation:sesForIZ} we see that 
$\homology^1(\calE(a_X + n + 1-d_1 + \epsilon)) \neq 0$, and, consequently,
that
$\homology^1(\calI_X(a_X + n + 1-d_1 + \epsilon)) \neq 0$.
Hence $\epsilon \geq d_1-1 - (a_X+n+1-d_1) = 2d_1 - a_X -n -2$.

Now we estimate $a(R/J)$.
\begin{align*}
a(R/J) & = \max \{j \mid \homology^{n-1}_{R_+}(R/J)_j \neq 0\}
\\
& = \max \{j \mid \homology^{n-2}(\strSh_Z(j)) \neq 0\}
\\
& = - \min \{j \mid \homology^{0}(\omega_Z(j)) \neq 0\} \qquad 
\text{(by Serre duality)}
\\
& = - \min \{j \mid \homology^{0}(\strSh_Z(2d_1-a_X-2n-2+j)) \neq 0\}
\\
& = 2d_1-a_X-2n-2- \epsilon
\\
& \leq -n.
\end{align*}
This yields a contradiction, since, by
Proposition~\ref{proposition:aInvDim},
$a(R/J) \geq -n+1$.
Hence $X$ is a complete intersection.
\end{proof}

\begin{proposition}
\label{proposition:Z}
Let $Z$ be the subscheme of $\projective^n$ defined in
Definition~\ref{definition:Z}.
Then 
\begin{enumerate}

\item
\label{proposition:Z:notred}
$Z$ is not reduced.

\item
\label{proposition:Z:M}
$M_1(Z) = M_1(X)(a_X+n+1-d_1)$.

\item
\label{proposition:Z:notCI}
$Z$ is not a complete intersection.

\item
\label{proposition:Z:depthRJ}
$\depth R/J = 1$.

\item
\label{proposition:Z:JnoLin}
$J$ does not contain any linear form.

\end{enumerate}
\end{proposition}

\begin{proof}
\eqref{proposition:Z:notred}:
By way of contradiction, suppose that $Z$ is reduced. Then
$\homology^1(\calI_Z(j)) = 0$ for all $j<0$
(Lemma~\ref{lemma:RedNoSecNeg}).
Therefore
\[
\min\{j \mid \homology^1(\projective^n, \calI_X(j ) ) \neq 0 \} 
\geq (a_X+n+1)-d_1 \geq d_1-1.
\]
Then Theorem~\ref{theorem:minHOne} yields a contradiction.

\eqref{proposition:Z:M}:
Note that $M_1(X)  = \homology^1_*(\calE)$ and 
that $M_1(Z)  = \homology^1_*(\calE(a_X+n+1-d_1))$.

\eqref{proposition:Z:notCI}:
If $Z$ were a complete intersection, then 
$\calE(a_X+n+1-d_1)$ would split (Proposition~\ref{proposition:lcisubcan})
from which it would follow that $X$ is a complete intersection.

\eqref{proposition:Z:depthRJ}:
By way of contradiction, assume that $\depth R/J \geq 2$. (Since $J$ is
unmixed, $\depth R/J >0$.) Then $\depth M^* \geq 3$. For all 
$\frakp \in \Spec R$, $\frakp \neq R_+$, $M^*_\frakp$ is a free
$R_\frakp$-module. Hence $M^*$ satisfies $(S_3)$. A rank-$2$ module
satisfying $(S_3)$ is free~\cite[Theorem~1.1]{EvansGriffithSyzProb1981}.
This implies that $Z$ is a complete intersection,
contradicting~\eqref{proposition:Z:notCI}.

\eqref{proposition:Z:JnoLin}: Since $J$ is a height-two unmixed ideal, if
it contains a linear form, it would be
a complete intersection, contradicting~\eqref{proposition:Z:notCI}.
\end{proof}

\begin{lemma}
\label{lemma:3quadMult}
Suppose that  $\rank_\Bbbk[J]_2 \geq 3$.
Then the multiplicity $e(R/J) \leq 3$.
\end{lemma}

\begin{proof}
Let $f_1, f_2, f_3$ be linearly independent quadratic polynomials in $J$.
If two of them say, without loss of generality, $f_1, f_2$, 
form a regular sequence, then the multiplicity $e(R/(f_1, f_2)) = 4$; 
since $J$ is unmixed and $J \neq (f_1, f_2)$, $e(R/J) \leq 3$. 

Otherwise, none of $f_1, f_2, f_3$ is irreducible.
There cannot exist a linear form $l$ that divides $f_i$ for all $1 \leq i
\leq 3$; for, otherwise, the height three ideal
$\displaystyle \left( \frac{f_1}{l}, \frac{f_2}{l},
\frac{f_3}{l} \right)$ would be subset of an associated prime ideal of
$R/J$, a contradiction of the fact that $J$ is a height-two unmixed ideal.
Therefore there exist pairwise linearly independent linear forms $l_1, l_2,
l_3$ such that 
$(f_1, f_2 ) \subseteq (l_1)$, $(f_2, f_3 ) \subseteq (l_2)$ and
$(f_1, f_3 ) \subseteq (l_3)$. 
Then, without loss of generality, $f_1 = l_1l_3$, $f_2 = l_1l_2$, $f_3 =
l_2l_3$.
If $l_1, l_2, l_3$ are linearly independent, then 
\[
(f_1, f_2, f_3) = (l_1, l_2) \cap (l_2, l_3) \cap (l_1, l_3).
\]
Note that 
\[
\Ass(R/J) = \Min(R/J) \subseteq \Min(R/(f_1, f_2, f_3)) = 
\{(l_1, l_2) , (l_2, l_3) , (l_1, l_3) \}
\]
and $\lambda_{R_\frakp}(R_\frakp/JR_\frakp) \leq 
\lambda_{R_\frakp}(R_\frakp/(f_1, f_2, f_3)R_\frakp)$
for every $\frakp \in \Ass(R/J)$. This would imply that $J$ is reduced.
Therefore we conclude that 
$l_1, l_2, l_3$ span a two-dimensional vector-space
and $(f_1, f_2, f_3) = (l_1, l_2)^2$. Since $\dim R/J = \dim R/(l_1,
l_2)^2$, we again see that $e(R/J) \leq 3$.
\end{proof}

\begin{lemma}
\label{lemma:3quadDepth}
Suppose that  $\rank_\Bbbk[J]_2 \geq 3$.
Then $\depth (R/J) \geq n-2$.
\end{lemma}

\begin{proof}
By Lemma~\ref{lemma:3quadMult}, $e(R/J ) \leq 3$.
Using the additivity formula of multiplicity
(cf., e.g.,~\cite[Exercise~12.11]{eiscommalg})
and the fact that $Z$ is not reduced
(Proposition~\ref{proposition:Z}\eqref{proposition:Z:notred}), we see that
$|\Min(R/J)| \leq 2$.

\underline{$|\Min(R/J)| = 1$}: 
Write $\Min(R/J) = \{\frakp\}$.
Since $J$ is not reduced, $2 e(R/\frakp) \leq e(R/J) \leq 3$,
so $e(R/\frakp) = 1$.
Hence, without loss of generality, $\frakp = (x_0, x_1)$.
If $e(R/J) = 2$, then $J$ is described in 
Discussion~\ref{discussionbox:engheta}.
Since $J$ does not contain a linear form
(Proposition~\ref{proposition:Z}\eqref{proposition:Z:JnoLin})
we see that $J = \frakp^2 + (g_0x_0+g_1x_1)$ 
and that $\projdim_R R/J =3$.
On the other hand, if $e(R/J) = 3$, then 
$\projdim_R R/J \leq 3$ by Proposition~\ref{proposition:mult3}.

\underline{$|\Min(R/J)| = 2$}: 
Write $\Min(R/J) = \{\frakp_1, \frakp_2\}$.
Since $J$ is not reduced, $2 \leq e(R/\frakp_1) + e(R/\frakp_2 ) 
< e(R/J) \leq 3$,
so $e(R/\frakp_1) = e(R/\frakp_2 )  = 1$.
Without loss of generality, $J = J_1 \cap \frakp_2$ with 
$J_1$ associated to $\frakp_1$ and $e(R/J_1) = 2$. 
Without loss of generality, $\frakp_1 = (x_0, x_1)$; the possible values of
$J_1$ is described in Discussion~\ref{discussionbox:engheta}.
Write $\frakp_2 = (y_2, y_3)$ for some linear forms $y_2, y_3$.

Suppose that $J_1 = (x_0, x_1^2)$.
If $\height (x_0, x_1, y_2, y_3) = 4$, then $[J]_2 = \langle x_0y_2,
x_0y_3\rangle$, so, without loss of
generality, $y_2 \in (x_0, x_1) \minus (x_0)$ and $y_3 \not \in (x_0,
x_1)$.
Now consider the exact sequence
\[
0 \to R/J \to 
\begin{matrix}
R/(x_0, x_1^2) \\ \oplus \\ R/(y_2, y_3) 
\end{matrix}
\to R/(x_0, x_1^2, y_2, y_3) \to 0.
\]
Note that $(x_0, x_1^2, y_2, y_3)=(x_0, y_2, y_3)$ is generated by a
regular sequence of length $3$. Hence
\[
\depth R/(x_0, x_1^2, y_2, y_3)  = \dim R/(x_0, x_1^2, y_2, y_3)= n-2.
\]
Therefore $\depth R/J = \dim R/J = n-1$.

Hence suppose that $J_1 =  (x_0^2, x_0x_1, x_1^2, g_0x_0 + g_1x_1)$ with
$g_0, g_1$ homogeneous such that $x_0, x_1, g_0, g_1$ is a regular
sequence. If the $g_i$ are not linear forms, then 
$\rank_\Bbbk [J]_2<3$, so the $g_i$ are linear forms.

Consider the intersection $[J]_2 \cap \langle x_0^2, x_0x_1 \rangle$
(inside $[J_1]_2$). This is not zero-dimensional, so $\frakp_2$ contains a
linear form inside $\frakp_1$.
Without loss of generality $y_2 \in
\frakp_1$ and $y_3 \not \in \frakp_1$. Therefore 
$\frakp_2 \cap \langle x_0^2, x_0x_1, x_1^2 \rangle$ is $2$-dimensional.
On the other hand, $[J_1]_2 \cap \frakp_2$ has dimension at least $3$,
so there exist $a \in \Bbbk \minus \{0 \}$ and $q_0 \in \Bbbk \langle x_0^2,
x_0x_1, x_1^2 \rangle$ such that 
\[
q := a(g_0x_0 + g_1x_1) + q_0 \in (y_2, y_3 ).
\]
Let $y_1 \in \frakp_1$ be a linear form such that $\frakp_1 = (y_1, y_2)$.
Then $J_1 = (y_1, y_2)^2 + (q)$ and, therefore, 
$J_1 + \frakp_2 =  (y_1^2, y_2, y_3)$, which is generated by a regular
sequence of length $3$. 
It now follows from the exact sequence 
\[
0 \to R/J \to 
\begin{matrix}
R/J_1 \\ \oplus \\ R/(y_2, y_3) 
\end{matrix}
\to R/(J_1 + \frakp_2) \to 0
\]
that $\depth R/J \geq n-2$.
\end{proof}

The construction of Section~\ref{section:construction} and the statements
of this section up to this point also hold when $n=3$.
Theorem~\ref{theorem:minHOne} with $n=3$ appears
in~\cite{ChiantiniVallabregaSubcanCurves1987}.

\begin{proof}[Proof of Theorem~\protect{\ref{theorem:CIorNegSocZero}}]
Since $X$ is not a complete intersection,
let $Z$ be the l.c.i subscheme of $\projective^n$ defined in
Definition~\ref{definition:Z}.
Applying Proposition~\ref{proposition:socM1Z} to $X$ itself, we see that 
$(\socle(M_1(X)))_j = 0$ for all $j < d_3-2$; hence we need to show that 
$(\socle(M_1(X)))_j = 0$ for all $j < a_X+n+2-d_1$.

If $\rank_\Bbbk [J]_2 \geq 3$, then
by Lemma~\ref{lemma:3quadDepth}, $\depth R/J \geq n-2 \geq 2$, 
contradicting Proposition~\ref{proposition:Z}\eqref{proposition:Z:depthRJ}.
Therefore $\rank_\Bbbk [J]_2 \leq 2$.
Then in the notation of 
Lemma~\ref{lemma:tid3}, $\delta_3 \geq 3$.
By Proposition~\ref{proposition:socM1Z},
$(\socle(M_1(Z)))_j = 0$ for all $j \leq 0$, which implies that 
$(\socle(M_1(X)))_j = 0$ for all $j < a_X+n+2-d_1$
(Proposition~\ref{proposition:Z}\eqref{proposition:Z:M}).
\end{proof}

\begin{proof}[Proof of Corollary~\protect{\ref{corollary:3Buchs}}]
Note that $X$ is
subcanonical~\cite[Theorem~2.2(d)]{HartshorneVarSmallCodim1974}.
Write $\omega_X = \strSh_X(a_X )$.

We need to show that if
$2d_1 \leq a_X + n+1$ and $X$ is $3$-Buchsbaum, then $X$ is a complete
intersection. (For this, we do not require that $X$ is smooth, but only
that it satisfies the hypothesis of Theorems~\ref{theorem:minHOne}
and~\ref{theorem:CIorNegSocZero}. Similarly, we only need that $(R_+)^3 M_1(X)
= 0$.)
By way of contradiction assume that $X$ is not a complete intersection.
Note that $(R_+)^2 M_1(X) \subseteq \socle(M_1(X))$.
By Theorem~\ref{theorem:CIorNegSocZero}, 
$(\socle(M_1(X)))_j = 0$ for all $j < a_X+n+2-d_1$.
Hence
\[
\min\{j \mid \homology^1(\projective^n, \calI_X(j)) \neq 0 \}
\geq a_X+n+2-d_1-2 \geq d_1-1,
\]
since $2d_1 \leq a_X+n+1$. Now apply Theorem~\ref{theorem:minHOne}.
\end{proof}

\begin{proof}[Proof of Corollary~\protect{\ref{corollary:gap}}]
Consider the set 
\[
K := \{k \in \ints \mid k < j \;\text{and}\; 
\homology^1(\calI_X(k)) \neq 0\}.
\]
If $K \neq \varnothing$, then let $k = \max K$. Then 
$(M_1(X))_k \subseteq \socle(M_1(X))$. 
Note that $k < a_X+n+2-d_1$. Hence
Theorem~\ref{theorem:CIorNegSocZero} yields that $X$ is a complete
intersection.
On the other hand, if $K = \varnothing$, then apply
Theorem~\ref{theorem:minHOne}.
\end{proof}

A remark on the proof of Theorem~\ref{theorem:CIorNegSocZero}.
It uses the Evans-Griffith syzygy
theorem~\cite[Theorem~1.1]{EvansGriffithSyzProb1981} to see that $\depth
R/J =1$
(Proposition~\ref{proposition:Z}) and obtain a contradiction when we later
argued that $\depth R/J \geq n-2$. 
If $n \geq 5$, we can avoid the use 
of the Evans-Griffith syzygy theorem as follows. 
As earlier, we see that $\depth R/J \geq n-2$. 
Hence $\homology^i_{R_+}(J)=0$ for $j \leq n-2$; consequently,
$\homology^i_{R_+}(M)=0$ for $j \leq n-2$ (see the exact sequence in the
proof of Lemma~\ref{lemma:Mmodxi})
and $\homology^i_*(\calE)=0$ for all $1 \leq i \leq n-3$.
By~\eqref{equation:duality}, 
$\homology^i_*(\calE)=0$ for all $3 \leq i \leq n-1$.
In other words, $\homology^i_*(\calE)=0$ for all $1 \leq i \leq n-1$.
Now use the Horrocks' splitting 
criterion~\cite[Chapter~I, Theorem~2.3.1]{OSSvb80}
to see that $X$ is a complete intersection.

\section*{Acknowledgements}

The computer algebra system~\cite{M2} 
provided valuable assistance in studying examples.

\def\cfudot#1{\ifmmode\setbox7\hbox{$\accent"5E#1$}\else
  \setbox7\hbox{\accent"5E#1}\penalty 10000\relax\fi\raise 1\ht7
  \hbox{\raise.1ex\hbox to 1\wd7{\hss.\hss}}\penalty 10000 \hskip-1\wd7\penalty
  10000\box7}


\begin{thebibliography}{OSS80}

\bibitem[BC83]{BallicoChiantiniSmSubcan1983}
E.~Ballico and L.~Chiantini.
\newblock On smooth subcanonical varieties of codimension {$2$} in {${\bf
  P}^{n},$} {$n\geq 4$}.
\newblock {\em Ann. Mat. Pura Appl. (4)}, 135:99--117 (1984), 1983.

\bibitem[Chi06]{ChiantiniVBReflShLowCodimVars2006}
L.~Chiantini.
\newblock Vector bundles, reflexive sheaves and low codimensional varieties.
\newblock {\em Rend. Semin. Mat. Univ. Politec. Torino}, 64(4):381--405, 2006.

\bibitem[CV84]{ChiantiniVallabregaSubcanCurves1983}
L.~Chiantini and P.~Valabrega.
\newblock Subcanonical curves and complete intersections in projective
  {$3$}-space.
\newblock {\em Ann. Mat. Pura Appl. (4)}, 138:309--330, 1984.

\bibitem[CV87]{ChiantiniVallabregaSubcanCurves1987}
L.~Chiantini and P.~Valabrega.
\newblock On some properties of subcanonical curves and unstable bundles.
\newblock {\em Comm. Algebra}, 15(9):1877--1887, 1987.

\bibitem[EF02]{ElliaFrancoCodimTwo2002}
P.~Ellia and D.~Franco.
\newblock On codimension two subvarieties of {$\bold P^5$} and {$\bold P^6$}.
\newblock {\em J. Algebraic Geom.}, 11(3):513--533, 2002.

\bibitem[EG81]{EvansGriffithSyzProb1981}
E.~G. Evans and P.~Griffith.
\newblock The syzygy problem.
\newblock {\em Ann. of Math. (2)}, 114(2):323--333, 1981.

\bibitem[Eis95]{eiscommalg}
D.~Eisenbud.
\newblock {\em Commutative algebra, with a View Toward Algebraic Geometry},
  volume 150 of {\em Graduate Texts in Mathematics}.
\newblock Springer-Verlag, New York, 1995.

\bibitem[Ell00]{ElliaSurvey2000}
P.~Ellia.
\newblock A survey on the classification of codimension two subvarieties of
  {$\Bbb P^n$}.
\newblock {\em Matematiche (Catania)}, 55(2):285--303 (2002), 2000.
\newblock Dedicated to Silvio Greco on the occasion of his 60th birthday
  (Catania, 2001).

\bibitem[Eng07]{EnghetaPDUnmThreeCubics2007}
B.~Engheta.
\newblock On the projective dimension and the unmixed part of three cubics.
\newblock {\em J. Algebra}, 316(2):715--734, 2007.

\bibitem[ES99]{ElliaSartikBuchs1999}
P.~Ellia and A.~Sarti.
\newblock On codimension two {$k$}-{B}uchsbaum subvarieties of {${\bf P}^n$}.
\newblock In {\em Commutative algebra and algebraic geometry ({F}errara)},
  volume 206 of {\em Lecture Notes in Pure and Appl. Math.}, pages 81--92.
  Dekker, New York, 1999.

\bibitem[Gre89]{GreenTrieste89}
M.~L. Green.
\newblock Koszul cohomology and geometry.
\newblock In {\em Lectures on {R}iemann surfaces ({T}rieste, 1987)}, pages
  177--200. World Sci. Publ., Teaneck, NJ, 1989.

\bibitem[M2]{M2}
D.~R. Grayson and M.~E. Stillman.
\newblock Macaulay 2, a software system for research in algebraic geometry,
  2006.
\newblock Available at \url{http://www.math.uiuc.edu/Macaulay2/}.

\bibitem[Har74]{HartshorneVarSmallCodim1974}
R.~Hartshorne.
\newblock Varieties of small codimension in projective space.
\newblock {\em Bull. Amer. Math. Soc.}, 80:1017--1032, 1974.

\bibitem[Har77]{HartAG}
R.~Hartshorne.
\newblock {\em Algebraic geometry}.
\newblock Springer-Verlag, New York, 1977.
\newblock Graduate Texts in Mathematics, No. 52.

\bibitem[Hoc73]{HochsterContracted1973}
M.~Hochster.
\newblock Contracted ideals from integral extensions of regular rings.
\newblock {\em Nagoya Math. J.}, 51:25--43, 1973.

\bibitem[KR00]{KumarRaoBuchsbaumBdles2000}
N.~M. Kumar and A.~P. Rao.
\newblock Buchsbaum bundles on {$\bold P^n$}.
\newblock volume 152, pages 195--199. 2000.
\newblock Commutative algebra, homological algebra and representation theory
  (Catania/Genoa/Rome, 1998).

\bibitem[Mat80]{MatsCA80}
H.~Matsumura.
\newblock {\em Commutative algebra}, volume~56 of {\em Mathematics Lecture Note
  Series}.
\newblock Benjamin/Cummings Publishing Co., Inc., Reading, Mass., second
  edition, 1980.

\bibitem[Mig98]{MiglioreLiaisonBook1998}
J.~C. Migliore.
\newblock {\em Introduction to liaison theory and deficiency modules}, volume
  165 of {\em Progress in Mathematics}.
\newblock Birkh\"{a}user Boston, Inc., Boston, MA, 1998.

\bibitem[OSS80]{OSSvb80}
C.~Okonek, M.~Schneider, and H.~Spindler.
\newblock {\em Vector bundles on complex projective spaces}, volume~3 of {\em
  Progress in Mathematics}.
\newblock Birkh\"auser Boston, Mass., 1980.

\bibitem[Sch82]{SchenzelDualCxBuchsbaumLNM1982}
P.~Schenzel.
\newblock {\em Dualisierende {K}omplexe in der lokalen {A}lgebra und
  {B}uchsbaum-{R}inge}, volume 907 of {\em Lecture Notes in Mathematics}.
\newblock Springer-Verlag, Berlin-New York, 1982.
\newblock With an English summary.

\bibitem[Sch98]{SchenzelUseLC1998}
P.~Schenzel.
\newblock On the use of local cohomology in algebra and geometry.
\newblock In {\em Six lectures on commutative algebra ({B}ellaterra, 1996)},
  volume 166 of {\em Progr. Math.}, pages 241--292. Birkh\"{a}user, Basel,
  1998.

\end{thebibliography}
\end{document}